\newcommand{\N}{\mathbb{N}}
\newcommand{\Z}{\mathbb{Z}}
\newcommand{\R}{\mathbb{R}}
\newcommand{\drawhex}[3][white]{\draw[fill=#1] (#2*2*cos{60}+#2*2,#3*2*sin{60}) -- ++(1,0) -- ++(60:1cm) -- ++(120:1cm) -- ++(-1,0) -- ++(240:1cm) -- cycle;}
\newcommand{\drawhexi}[3][white]{\draw[fill=#1] (#2*2*cos{60}+#2*2+1+cos{60},#3*2*sin{60}+sin{60}) -- ++(1,0) -- ++(60:1cm) -- ++(120:1cm) -- ++(-1,0) -- ++(240:1cm) -- cycle;}
\newcommand{\poshex}[2]{#1*2*cos{60}+#1*2+0.5,#2*2*sin{60}+sin{60}}
\newcommand{\poshexi}[2]{#1*2*cos{60}+#1*2+1.5+cos{60},#2*2*sin{60}+2*sin{60}}
\pgfmathsetmacro\sins{sin{60}}
\pgfmathsetmacro\coss{cos{60}}
\newtheorem{lemma}{Lemma}
\newtheorem{corollary}{Corollary}
\newtheorem{theorem}{Theorem}
\newtheorem{problem}{Problem}
\theoremstyle{remark}
\newtheorem{remark}{Remark}
\title{Infinite Hex is arithmetic}
\author{Ilkka Törmä \\ Department of Mathematics and Statistics \\ University of Turku \\ Turku, Finland \\ \href{mailto:iatorm@utu.fi}{\texttt{iatorm@utu.fi}}}
\begin{document}
\maketitle

\begin{abstract}
  Hex is a well known connection game in which two players attempt to connect opposite sides of the board by colored stones.
  In 2022, Hamkins and Leonessi introduced an infinite version, in which the goal is to construct a certain kind of two-way infinite path of adjacent stones.
  It was explicitly left open whether the winning condition is Borel.
  We prove that it is arithmetic, with complexity between $\Sigma^0_4$ and $\Delta^0_5$.
\end{abstract}

\section{Introduction}

Hex is a strategy game played on a rhombus-shaped board of hexagonal tiles.
Two players, black and white, alternate placing one stone of their respective color onto some vacant tile.
Two opposite sides of the board are colored black and the other two white, and the goal of each player is to form a path of adjacent stones connecting their two sides.
Figure~\ref{fig:finite-hex} depicts the final position in a game of Hex played on a $9 \times 9$ board.
Hex cannot end in a draw: a completely filled board is a win for exactly one player \cite{Ga79}.
A strategy-stealing argument shows that the first player is the one with a winning strategy \cite{Na52}.

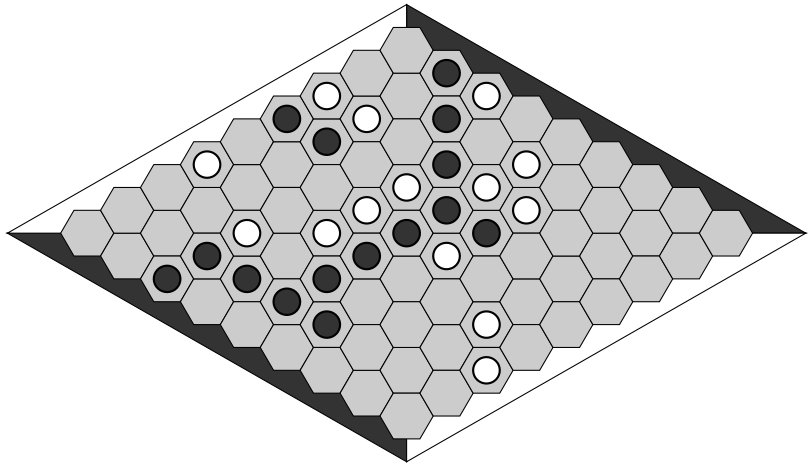
\begin{figure}[htp]
\begin{center}
\begin{tikzpicture}[scale=0.35,rotate=-60]
\draw[fill=black!80] (\poshexi{4}{-1}) -- (\poshexi{-1}{-6}) -- (\poshexi{4}{9}) -- (\poshexi{-1}{4}) -- cycle;
\draw[fill=black!0] (\poshexi{4}{-1}) -- (\poshexi{4}{9}) -- (\poshexi{-1}{-6}) -- (\poshexi{-1}{4}) -- cycle;

\foreach \x in {0,...,3}{
  \pgfmathsetmacro\yb{\x-4}
  \pgfmathsetmacro\yt{\x+4}
  \foreach \y in {\yb,...,\yt}{
    \drawhex[black!20]{\x}{\y}
    \drawhexi[black!20]{\x}{\y}
  }
}
\foreach \y in {0,...,8}{
  \drawhex[black!20]{4}{\y}
}

\foreach \x/\y in {0/1,2/3,1/-3,1/-2,2/-1,2/0,2/1,2/2,1/4}{
  \draw [thick,fill=black!80] (\poshex{\x}{\y}) circle (0.5cm);
}
\foreach \x/\y in {0/1,2/3,1/-2,2/-1,1/3,0/4}{
  \draw [thick,fill=black!80] (\poshexi{\x}{\y}) circle (0.5cm);
}
\foreach \x/\y in {0/-1,0/2,2/4,2/5,4/2,1/-1,1/5}{
  \draw [thick,fill=black!0] (\poshex{\x}{\y}) circle (0.5cm);
}
\foreach \x/\y in {0/2,2/4,1/0,1/1,1/2,3/2,2/2}{
  \draw [thick,fill=black!0] (\poshexi{\x}{\y}) circle (0.5cm);
}
\end{tikzpicture}
\end{center}
\caption{A winning position for black on a $9 \times 9$ Hex board.}
\label{fig:finite-hex}
\end{figure}

The theory of \emph{infinite games} concerns extensions of board games such as chess to boards of infinite size, possibly with an infinite number of pieces, and possibly played for an infinite or even transfinite number of turns.
For example, recent results in infinite chess include \cite{BrHaSc12,EvHaPe17}.

In the preprint \cite{HaLe23}, Hamkins and Leonessi introduce an infinite variant of Hex, fittingly called \emph{infinite Hex}.
It is played on an infinite board of hexagonal tiles, which we can model as a partition of the plane $\R^2$ into regular hexagons.
The two players still alternate placing a stone of their color in any vacant tile.
Denoting by $T$ the set of all tiles on the board, a position of the game can be modeled by a coloring $x : T \to \{\mathrm{black}, \mathrm{white}, \mathrm{vacant}\}$ that tells whether a given tile contains a black stone, a white stone, or no stone.
We usually say that the tile itself is colored black or white.

The winning condition for the black player is as follows.
A position $x$ is \emph{winning for black} if there exists a two-way infinite sequence $(t_n)_{n \in \Z}$ of tiles with the following properties.
\begin{enumerate}
\item Each tile in the sequence is black: $x(t_i) = \mathrm{black}$ for all $i \in \Z$.
\item Consecutive tiles $t_i$ and $t_{i+1}$ of the sequence are adjacent on the board, for all $i \in \Z$.
\item For each $n \geq 0$ and all large enough $i > 0$, the tile $t_i$ lies within the translated positive quadrant $[n, \infty)^2$ and the tile $t_{-i}$ lies within the translated negative quadrant $(-\infty, -n]^2$.
\end{enumerate}
See Figure~\ref{fig:winning-condition} for a visualization.
The winning condition for white is the same, but with black replaced by white and the quadrants rotated by $90$ degrees, so the path connects $(-\infty, -n] \times [n, \infty)$ and $[n, \infty) \times (-\infty, -n]$.
We let $\mathcal{W}$ be the set of positions that are winning for black.
Since $x \in \mathcal{W}$ only depends on the positions of black tiles in $x$, for simplicity we will assume that all other tiles are vacant.

\begin{figure}[htp]
\begin{center}
\begin{tikzpicture}[scale=0.45]

\draw [very thick, black!80, densely dotted, rounded corners] (-8,-5) -- ++(2,0) -- ++(1,1) -- ++(-2,2) -- ++(1,1) -- ++(2,0) -- ++(-1,-1) -- ++(2,-2) -- ++(3,0) -- ++(2,2) -- ++(-1,1) -- ++(-2,0) -- ++(2,2) -- ++(1,-1) -- ++(1,0) -- ++(1,-1) -- ++(1,0) -- ++(1,-1) -- ++(-2,-2) -- ++(2,0) -- ++(1,1) -- ++(-1,1) -- ++(1,1) -- ++(-1,1) -- ++(-2,0) -- ++(1,1) -- ++(-3,0) -- ++(-3,3) -- ++(1,1) -- ++(3,0) -- ++(2,-2) -- ++(-1,-1) -- ++(3,0) -- ++(-2,2) -- ++(1,0) -- ++(1,1) -- ++(-1,1);

\draw [thick] (3,6) -- (3,3) -- (8,3);
\draw [thick] (-3,-6) -- (-3,-3) -- (-8,-3);
\fill (3,3) circle (0.15cm);
\fill (-3,-3) circle (0.15cm);
\node [above right] at (-3,-3) {$(-n,-n)$};
\node [left] at (3,3) {$(n,n)$};

\end{tikzpicture}
\end{center}
\caption{Black wins if some two-way infinite black path eventually goes from every southwest quadrant to every northeast quadrant.}
\label{fig:winning-condition}
\end{figure}
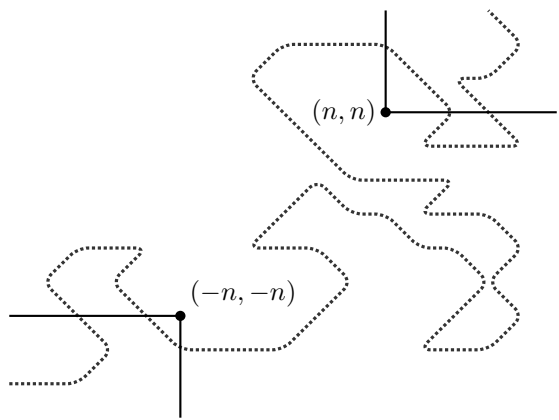

As the set $\mathcal{W}$ is defined by a second-order condition, it is not clear whether it is a Borel subset of all positions.
Hamkins and Leonessi state this as an explicit open problem in \cite{HaLe23}.
A game with a Borel winning condition is well behaved from a set-theoretic standpoint: the Borel determinacy theorem, which is provable in Zermelo-Fraenkel set theory without the axiom of choice \cite{Ma75}, states that starting from any position, either one player has a winning strategy or both players have drawing strategies (if the game admits draws).
Using the axiom of choice one can construct \emph{non-determined} games that admit no winning or drawing strategies for either player.
Hamkins and Leonessi prove in \cite{HaLe23} that starting from the empty board, infinite Hex is a draw with perfect play, but it is \emph{a priori} unknown if other starting positions are even determined.

Our main theorem states that infinite Hex is indeed a Borel game, and thus admits winning or drawing strategies from any starting position within ZF set theory.
In fact, it is arithmetic for the lightface hierarchy.

\begin{theorem}
\label{thm:upper-bound}
  The set $\mathcal{W}$ of colorings of $T$ on which black wins is $\Delta^0_5$ on the lightface hierarchy, and hence $\mathbf{\Delta}^0_5$ on the boldface hierarchy.
\end{theorem}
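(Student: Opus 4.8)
The plan is to express $\mathcal{W}$ through an arithmetic condition on finite sub-configurations, by combining a planar separation (duality) argument in the spirit of the Hex theorem with a compactness argument, and then to count quantifiers. The first step is to replace the two-way infinite winning path by two one-way pieces. Say that a black component $C$ of a coloring $x$ \emph{escapes NE} if it contains a one-way infinite simple black path $t_0,t_1,\dots$ both of whose coordinates tend to $+\infty$, and symmetrically define \emph{escapes SW}. I would show that black wins at $x$ if and only if some black tile $t$ has a black component $C$ that escapes both NE and SW. For one direction, given a winning path $(t_n)_{n\in\mathbb{Z}}$, property~(3) forces the two tails to lie eventually inside disjoint far quadrants, so repetitions occur only inside a bounded central region and can be excised; splitting the resulting simple path at index~$0$ yields the two escaping pieces, both in the component $C$ of $t:=t_0$. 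Conversely, given an NE-escaping ray and an SW-escaping ray in a common $C$, join their basepoints by a finite black path inside $C$ and prune: the two rays are eventually confined to disjoint far quadrants, so their union with the joining path trims to a simple two-way path that still escapes both ways. Hence $\mathcal{W}=\{x:\exists t\,(\mathrm{NE}(t)\wedge \mathrm{SW}(t))\}$, where $\mathrm{NE}(t)$ (resp.\ $\mathrm{SW}(t)$) says that the black component of $t$ escapes NE (resp.\ SW), and it suffices to show that $\mathrm{NE}$ is arithmetic of complexity $\Pi^0_4$.

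To analyze $\mathrm{NE}(t)$, write $Q_k=[k,\infty)^2$. The equivalence I would prove is that the black component $C$ of $t$ escapes NE if and only if there is a \emph{nested} sequence $K_0\supseteq K_1\supseteq\cdots$ in which each $K_k$ is an infinite connected component of $(\text{black tiles})\cap Q_k$ and $K_0\subseteq C$; equivalently, there are black tiles $v_k\in Q_k$, with $v_0$ joined to $t$ by a black path, such that for every $k$ the tiles $v_k$ and $v_{k+1}$ lie in the same (necessarily infinite) component of $(\text{black})\cap Q_k$. One direction takes successive tails of the escaping ray; the other concatenates black paths from $v_k$ to $v_{k+1}$ inside $Q_k$ into an escaping walk and prunes it (this also shows an escaping walk in $C$ always upgrades to an escaping ray). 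The subtlety is that this existential over an infinite chain cannot be resolved by a naive König argument: the statement ``for every depth $d$ there is such a finite chain $v_0,\dots,v_d$'' is also satisfied by ``comb''-shaped configurations --- a spine running along one axis with a tooth escaping to infinity at every height, or an infinite chain of such combs --- none of which contain any escaping path. Escape to infinity is not witnessed by a finite piece, and the obvious search trees branch infinitely. I would resolve this by passing to the planar dual: by a separation theorem for the hexagonal tiling (the quarter-plane analogue of the Hex theorem), $(\text{black})\cap Q_k$ fails to contain a suitable nested-extendable infinite component precisely when the non-black tiles of $Q_k$ contain a connected \emph{separator}, roughly a path (with a controlled behaviour at infinity) running between the two edges $\{x=k\}$ and $\{y=k\}$ of $Q_k$ and walling the corner off from infinity. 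Separators are themselves paths, anchored at the edges of $Q_k$, so their existence is arithmetic and the associated search is finitely branching; the compactness argument then goes through and pins $\mathrm{NE}(t)$ at complexity $\Pi^0_4$.

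For the quantifier count: ``$v$ lies in the black component of $t$'' is $\Sigma^0_1$ (an existential over a finite black path) and ``a given component of $(\text{black})\cap Q_k$ is infinite'' is $\Pi^0_2$; feeding these into the separator reformulation, taken over all $k$, yields $\mathrm{NE}(t)$ (and $\mathrm{SW}(t)$) of complexity $\Pi^0_4$. Therefore $\mathcal{W}=\{x:\exists t\,(\mathrm{NE}(t)\wedge\mathrm{SW}(t))\}$ is $\Sigma^0_5$, its complement $\{x:\forall t\,(\neg\mathrm{NE}(t)\vee\neg\mathrm{SW}(t))\}$ is $\Pi^0_5$, and so $\mathcal{W}$ is $\Delta^0_5$ on the lightface hierarchy; the boldface bound $\mathbf{\Delta}^0_5$ is then immediate, since every lightface $\Delta^0_5$ set is boldface $\mathbf{\Delta}^0_5$.

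The main obstacle is the second step: converting ``some coherent escaping path exists'' into an arithmetic statement. A direct compactness argument does not apply because escape is an infinitary property not certified by any finite sub-configuration, and the comb-like examples show that every plausible finite-approximation condition is strictly too weak. The crux is to trade the existence of an escaping path for the \emph{absence} of non-black separators at every level, where planarity simultaneously makes the separators finitely describable and keeps the relevant König tree finitely branching; carrying out this duality in both directions --- in particular handling the behaviour of separators at infinity within a quadrant and their interaction with the component $C$ --- is the technical heart of the proof.
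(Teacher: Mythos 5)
Your proposal has two genuine gaps, one logical and one structural.

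First, the final quantifier count does not deliver the theorem. If $\mathrm{NE}(t)$ and $\mathrm{SW}(t)$ are $\Pi^0_4$ predicates of the tile $t$ and the coloring, then $\mathcal{W}=\{x:\exists t\,(\mathrm{NE}(t)\wedge\mathrm{SW}(t))\}$ is a countable union of $\Pi^0_4$ sets, i.e.\ $\Sigma^0_5$ --- and that is where your argument stops. The step ``$\mathcal{W}$ is $\Sigma^0_5$, its complement is $\Pi^0_5$, hence $\mathcal{W}$ is $\Delta^0_5$'' is a non sequitur: the complement of \emph{any} $\Sigma^0_5$ set is $\Pi^0_5$ by definition, so this gives no new information, and $\Delta^0_5$ requires $\mathcal{W}$ itself to lie in both classes. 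Since $\Delta^0_5\subsetneq\Sigma^0_5$, your route proves a strictly weaker bound (essentially the $\Sigma^0_5$-type bound one gets easily; the published upper bound is $\Delta^0_5$). The paper avoids this by \emph{not} packaging the whole escape condition inside a predicate of the anchor tile: its characterization is $\phi_1\wedge(\phi_2({+})\vee\phi_3({+}))\wedge(\phi_2({-})\vee\phi_3({-}))$, where $\phi_1$ is a weak $\Sigma^0_4$ connectivity condition (the $\exists t$ is its outermost quantifier) and the corrections $\phi_2,\phi_3$ are global $\Pi^0_4$ and $\Sigma^0_4$ conditions that do not mention $t$ at all; a finite Boolean combination of level-$4$ formulas lies in $\Delta^0_5$. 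To salvage your approach you would need to restructure it so that the coherence conditions escape the scope of $\exists t$.

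Second, the claimed $\Pi^0_4$ characterization of $\mathrm{NE}(t)$ by non-black separators is not substantiated, and the natural reading of it is false precisely on the comb configuration you cite. In that example (a vertical spine with infinitely many eastward teeth, each oscillating north with unbounded amplitude but returning to its base line), no black ray escapes NE --- there is indeed no infinite nested chain of infinite components --- yet there is also no connected set of non-black tiles separating the corner of $Q_k$ from infinity, because each tooth is an infinite black ray crossing any candidate separator. So ``no escape'' is \emph{not} dual to ``a vacant separator exists''; the obstruction in the comb is the failure of the nesting to continue, not the finiteness of what is reachable from the corner. This is exactly the part of the problem the paper spends most of its effort on: it follows the black/vacant boundary as bi-infinite edge sequences, shows via a packing argument (Lemma~\ref{lem:no-room}) that infinitely many boundary sequences trapped in a quarter-plane must each drift away from every line parallel to the border they enter through, and then argues that either only finitely many such trapped sequences exist ($\phi_2$) or some boundary sequence itself witnesses escape ($\phi_3$). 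Your proposal has no analogue of this mechanism, and ``controlled behaviour at infinity'' of the separators is exactly the point that cannot be left vague.
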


A weaker version of Theorem~\ref{thm:upper-bound}, giving an upper bound of $\Sigma^0_7$, was published by the author as an answer to a question of Hamkins on MathOverflow \cite{MO}.

\begin{corollary}
  It is provable in ZF set theory that for every initial position of infinite Hex, either one player has a winning strategy or both players have drawing strategies.
\end{corollary}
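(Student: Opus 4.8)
The plan is to combine Theorem~\ref{thm:upper-bound} with Martin's Borel determinacy theorem~\cite{Ma75}, which is itself a theorem of ZF. First I would model infinite Hex from a fixed initial position $x_0$ as a Gale--Stewart game on a tree. A \emph{play} is an infinite sequence $(t_0, t_1, t_2, \dots)$ of tiles in which the two players alternately name a tile that is vacant in the position built from $x_0$ by the moves so far; since $T$ is countably infinite there is always a legal move, so the tree $\mathcal{T} \subseteq T^{<\omega}$ of finite legal move sequences is pruned and every play is infinite. A play determines a final coloring $x^\infty \in \{\mathrm{black},\mathrm{white},\mathrm{vacant}\}^T$ in the obvious way, and I claim the resulting map $\Phi \colon [\mathcal{T}] \to \{\mathrm{black},\mathrm{white},\mathrm{vacant}\}^T$ is Borel: for each fixed tile $t$, the set of plays in which $t$ ends up black is either the whole space (if $x_0(t) = \mathrm{black}$) or a countable union of basic clopen sets (one for each round at which the black player could place $t$), hence open, and likewise for white, so every coordinate of $\Phi$ is Borel and therefore so is $\Phi$. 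Since $\mathcal{W}$ is $\mathbf{\Delta}^0_5$ by Theorem~\ref{thm:upper-bound}, the set $A_{\mathrm{b}} := \Phi^{-1}(\mathcal{W})$ of plays in which black completes a winning path is Borel; applying the same reasoning to the rotated white winning condition, the set $A_{\mathrm{w}}$ of plays in which white completes a winning path is Borel as well.

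Next I would invoke Borel determinacy twice. Applied to the game on $\mathcal{T}$ with payoff set $A_{\mathrm{b}}$ (black trying to reach $A_{\mathrm{b}}$), it gives that \emph{either} black has a strategy forcing a play in $A_{\mathrm{b}}$, i.e.\ a winning strategy in infinite Hex, \emph{or} white has a strategy forcing a play in $[\mathcal{T}] \setminus A_{\mathrm{b}}$ --- and the latter is precisely a drawing strategy for white, since it guarantees that black never forms a winning path. Symmetrically, applied with payoff $A_{\mathrm{w}}$, it gives that either white has a winning strategy or black has a drawing strategy. (Martin's theorem is usually stated for games on $\omega^{\omega}$; here $[\mathcal{T}]$ is a closed subset of a Baire space and the alternating game on a pruned tree reduces to a Borel game on $\omega^{\omega}$ in the standard way, so the tree version applies.)

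Finally I would combine the two dichotomies by a short case analysis: if either dichotomy lands on its first horn, one of the players has a winning strategy; in the remaining case both land on their second horns, so white has a drawing strategy and black has a drawing strategy, i.e.\ both players have drawing strategies. Since every ingredient --- the Borel measurability of $\Phi$ (and hence of $A_{\mathrm{b}}$ and $A_{\mathrm{w}}$), the elementary complexity computation of Theorem~\ref{thm:upper-bound}, and Martin's determinacy theorem --- is carried out in ZF, so is the conclusion. The hard part is entirely Theorem~\ref{thm:upper-bound}; within the corollary itself the only points needing care are verifying that the play-to-coloring map is Borel (so that Borel determinacy is applicable to $A_{\mathrm{b}}$ and $A_{\mathrm{w}}$) and matching ``forcing the complement of the opponent's winning set'' with the notion of drawing strategy used in~\cite{HaLe23}, neither of which presents a genuine obstacle.
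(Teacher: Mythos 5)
Your proposal is correct and is exactly the argument the paper intends: the corollary is stated without a separate proof because it follows immediately from Theorem~\ref{thm:upper-bound} together with Martin's Borel determinacy theorem, applied once to each player's (Borel) payoff set and combined by the same case analysis you describe. The extra care you take in verifying that the play-to-coloring map is Borel is a reasonable elaboration of a step the paper leaves implicit, not a deviation in approach.
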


Conversely, we prove a lower bound for the position of $\mathcal{W}$ on this hierarchy.

\begin{theorem}
\label{thm:lower-bound}
  The set $\mathcal{W}$ of colorings of $T$ on which black wins is $\mathbf{\Sigma}^0_4$-hard for Wadge reductions on the boldface hierarchy, and $\Sigma^0_4$-hard for Turing reductions on the lightface hierarchy.
\end{theorem}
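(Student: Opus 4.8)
The plan is to build a continuous (and, in the lightface case, computable) reduction to $\mathcal{W}$ from a $\Sigma^0_4$-complete set; since $\mathcal{W}$ is a subset of $2^T$, a copy of Cantor space, such a reduction yields $\mathbf{\Sigma}^0_4$-hardness for Wadge reductions, and when the source is taken lightface $\Sigma^0_4$-complete it yields $\Sigma^0_4$-hardness for Turing reductions. Fix a $\Sigma^0_4$-complete set in the normal form
\[ S = \{x\in 2^\omega : \exists a\;\forall b\;\exists c\;\forall d\; R(x,a,b,c,d)\}, \]
where $R$ is recursive and, for each $a,b,c,d$, the truth value $R(x,a,b,c,d)$ depends on only finitely many bits of $x$. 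I want a black coloring $f(x)$ of $T$ whose value on each tile is a finite Boolean function of $x$ (so that $f$ is continuous, and computable), with $x\in S\iff f(x)\in\mathcal W$. As a first move I would fix, independently of $x$, a black ray escaping into the southwest quadrant whose finite endpoint is a distinguished tile $O$, and place no other black tile with both coordinates negative. Then $f(x)\in\mathcal W$ if and only if the black graph of $f(x)$ contains a ray from $O$ that enters every translated quadrant $[n,\infty)^2$, $n\in\N$; the entire nontrivial part of the construction lives in the far northeast.

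Next I would install nested corridor gadgets mirroring the alternation $\exists a\;\forall b\;\exists c\;\forall d$. From $O$ the black skeleton fans out into pairwise disjoint \emph{lanes} $L_a$ ($a\in\N$) heading northeast; committing the path to $L_a$ encodes the choice of $a$. Each $L_a$ is interrupted by an infinite sequence of \emph{gates} $G_{a,0},G_{a,1},\dots$, laid out so that every black tile of $f(x)$ both of whose coordinates exceed the position of $G_{a,b}$ lies beyond $G_{a,b}$; consequently a ray along $L_a$ meets every translated quadrant only if it clears every gate, which forces the quantifier $\forall b$. The gate $G_{a,b}$ offers infinitely many \emph{tracks} $\tau_{a,b,c}$ ($c\in\N$), of which only one need be usable to clear $G_{a,b}$, encoding $\exists c$. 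Finally, the track $\tau_{a,b,c}$ is governed by an innermost gadget: an infinite array of black \emph{bridges} indexed by $d$, the $d$-th present exactly when $R(x,a,b,c,d)$ holds, arranged so that the track functions as intended precisely when $\forall d\,R(x,a,b,c,d)$. The design constraint here — the heart of the construction — is to route the bridges so that ``all bridges present'' both witnesses the $\Pi^0_1$ condition $\forall d\,R(x,a,b,c,d)$ \emph{and} returns the path to $L_a$ to press on toward $G_{a,b+1}$, rather than sliding off to infinity along a single track, which would trivialize the outer quantifier layers and collapse the complexity.

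With $f$ defined I would verify the reduction. For $x\in S\Rightarrow f(x)\in\mathcal W$, take a witness $a$ and, for each $b$, a witness $c_b$ with $\forall d\,R(x,a,b,c_b,d)$, and read off an explicit $O$-ray running up $L_a$, threading each $\tau_{a,b,c_b}$, clearing every $G_{a,b}$, and hence entering every translated quadrant. For the converse I would prove a rigidity lemma: every black $O$-ray that enters all translated quadrants must, after discarding harmless finite excursions, commit to a single lane $L_a$, clear every gate $G_{a,b}$, and inside $G_{a,b}$ follow some track $\tau_{a,b,c_b}$ all of whose bridges are present — so that $\forall b\;\exists c\;\forall d\,R(x,a,b,c,d)$ holds and $x\in S$. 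To license such a lemma I would separate all lanes, gates, tracks and bridges by wide buffers of permanently vacant tiles, so that the black connected sets of $f(x)$ are exactly the intended corridors and no black path can shortcut between them.

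The step I expect to be the main obstacle is making the rigidity lemma true, and this is exactly where the subtlety of the winning condition bites. A black component may meet every translated quadrant and yet contain no ray heading northeast at all — for instance an infinite spine running due north with finite teeth reaching ever farther east — so it does not suffice to arrange that ``the component of $O$ is infinite and unbounded to the northeast''; one must arrange that an actual ray is forced to clear every gate. This is precisely why the innermost $\forall d$ gadget cannot be a bare northeast-bound ray and must hand control back to $L_a$, and engineering the bridge layout to do this while remaining robust against unintended routings is, I expect, the bulk of the work. Granting the construction and the rigidity lemma, $S\leq_W\mathcal W$ gives the boldface $\mathbf{\Sigma}^0_4$-hardness, and taking $S$ lightface $\Sigma^0_4$-complete together with the computability of $f$ gives the lightface $\Sigma^0_4$-hardness for Turing reductions.
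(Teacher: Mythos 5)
Your high-level framing matches the paper's: reduce an arbitrary (lightface) $\Sigma^0_4$ set to $\mathcal{W}$ via a map $f$ under which each tile's colour depends on finitely many bits of $x$, fix the southwest ray once and for all, and encode the outer $\exists a$ as a choice among infinitely many branches emanating from a common trunk. The gap is that the heart of the reduction --- the gadget realizing $\forall b\,\exists c\,\forall d$ --- is never constructed, and the architecture you propose for it cannot work as described. Since $f$ is continuous, any \emph{finite} region of the board depends on only finitely many bits of $x$, so the $\Pi^0_1$ condition $\forall d\,R(x,a,b,c,d)$ cannot decide whether a finite chokepoint is passable; each track $\tau_{a,b,c}$ must therefore be an infinite structure. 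But then a path that ``threads $\tau_{a,b,c_b}$ and returns to $L_a$ to press on toward $G_{a,b+1}$'' would have to traverse infinitely many tiles before returning, which is impossible. This is exactly the tension you flag (``sliding off to infinity along a single track''), and it is not an engineering detail to be deferred: resolving it is the content of the theorem. (By contrast, the rigidity lemma you worry about is comparatively cheap once the black set is tree-shaped, since a winning path must then eventually live in a single branch.)

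The paper resolves the tension by \emph{not} localizing $\exists c\,\forall d$ at a gate. It first rewrites the inner $\Sigma^0_2$ part $\bigcup_c\bigcap_d X(a,b,c,d)$ in the normal form $\bigcup_k\bigcap_{n\ge k}X(a,b,n)$ (``cofinitely many $n$''), and then smears the whole block $\forall b\,\exists k\,\forall n\ge k$ over the asymptotic behaviour of a single infinite branch $P_a$: the branch oscillates, being ``commanded'' to descend to the horizontal line $k_{a+b}$ at step $n$ exactly when $x\notin X(a,b,n)$, so that $P_a$ reaches to ${+}$ if and only if for every $b$ it descends to $k_{a+b}$ only finitely often. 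The three inner quantifiers are thus absorbed into the intrinsic $\forall n\,\exists i\,\forall j\ge i$ structure of ``the path eventually stays in every $Q^+_n$'', and only the outermost $\exists a$ is a geometric choice. Your plan also misses a second genuine difficulty that this architecture creates: the branches $P_0,P_1,\dots$ cannot cross one another, so how far $P_a$ can descend at a given step is constrained by the branches below it; the paper needs explicit bookkeeping of deferred commands (the sets $A(i,j)$ and the quantities $b(i,j)$, $r(i,j)$) to prove that every command is eventually executed when $x\notin X$ and that commands eventually cease to matter when $x\in X$. Without either a concrete innermost gadget or this reformulation, the proposal establishes only the shape of a reduction, not the reduction itself.
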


\section{Definitions and notation}

\subsection{Infinite Hex and discrete geometry}

Fix a regular hexagonal tiling of $\R^2$ on which some of the edges are horizontal, and let $T$ be the set of tiles in this tiling.
A coloring over $T$ is a function $x : T \to \{0,1\}$ where $x(t) = 0$ means that tile $t$ is vacant, and $x(t) = 1$ means that it is black.
We fix some natural order on $T$, so that the set of colorings on $T$ is computably isomorphic to $\{0,1\}^\N$.
Especially in Section~\ref{sec:upper}, we have an implicit coloring $x \in \{0,1\}^T$.
In this case we let $B \subset T$ be the set of black tiles, and $N = T \setminus R$ the vacant tiles in $x$.

We say two tiles of $T$ are \emph{adjacent} or \emph{neighbors} if they share an edge.
For $X \subset T$ and $a \in X$, write $C(X,a) \subset X$ for the connected component of $a$ in $X$.
Write $\mathcal{C}(X)$ for the set of all connected components of $X$.

An \emph{edge} of a set $X \subset T$ is a pair $\beta = (a,b)$ of adjacent tiles $a \in X, b \in T \setminus X$.
It represents the actual edge between a tile in $X$ and a tile outside $X$.
The set of edges of $X$ is denoted by $E(X)$.
Each edge $\beta \in E(X)$ defines a two-way infinite sequence $(\beta_i)_{i \in \Z} = (a_i,b_i)_{i \in \Z}$ of edges of $X$ by following them in a consistent direction, say with the $X$-part on the left when going in the positive direction of $\Z$.
Note that this sequence can be periodic.
See Figure~\ref{fig:edge-seq} for an example.

\begin{figure}[htp]
\begin{center}
\begin{tikzpicture}[scale=1.2]

\clip (-2.8,-2.25) rectangle (6.7,3.3);

\foreach \x in {-3,...,2}{
  \foreach \y in {-3,...,2}{
    \drawhex{\x}{\y}
    \drawhexi{\x}{\y}
  }
}

\foreach \x/\y in {0/0,0/1,0/2,1/0}{
  \draw [fill=black!80] (\poshex{\x}{\y}) circle (0.5cm);
}
\foreach \x/\y in {0/0,0/-2}{
  \draw [fill=black!80] (\poshexi{\x}{\y}) circle (0.5cm);
}

\begin{scope}[xshift=0.5 cm+0.5*\coss cm,yshift=-0.5*\sins cm]
\node[draw,fill=white] at (\poshexi{-1}{1}) {$\beta_{-4}$};
\end{scope}
\begin{scope}[xshift=-0.5 cm-0.5*\coss cm,yshift=-0.5*\sins cm]
\node[draw,fill=white] at (\poshex{0}{1}) {$\beta_{-3}$};
\end{scope}
\begin{scope}[xshift=0.5 cm+0.5*\coss cm,yshift=-0.5*\sins cm]
\node[draw,fill=white] at (\poshexi{-1}{0}) {$\beta_{-2}$};
\end{scope}
\begin{scope}[xshift=-0.5 cm-0.5*\coss cm,yshift=-0.5*\sins cm]
\node[draw,fill=white] at (\poshex{0}{0}) {$\beta_{-1}$};
\end{scope}
\begin{scope}[yshift=-\sins cm]
\node[draw,fill=white] at (\poshex{0}{0}) {$\beta_0$};
\end{scope}
\begin{scope}[xshift=0.5 cm+0.5*\coss cm,yshift=-0.5*\sins cm]
\node[draw,fill=white] at (\poshex{0}{0}) {$\beta_1$};
\end{scope}
\begin{scope}[yshift=-\sins cm]
\node[draw,fill=white] at (\poshexi{0}{0}) {$\beta_2$};
\end{scope}
\begin{scope}[xshift=-0.5 cm-0.5*\coss cm,yshift=-0.5*\sins cm]
\node[draw,fill=white] at (\poshex{1}{0}) {$\beta_3$};
\end{scope}
\begin{scope}[yshift=-\sins cm]
\node[draw,fill=white] at (\poshex{1}{0}) {$\beta_4$};
\end{scope}
\begin{scope}[xshift=0.5 cm+0.5*\coss cm,yshift=-0.5*\sins cm]
\node[draw,fill=white] at (\poshex{1}{0}) {$\beta_5$};
\end{scope}
\begin{scope}[xshift=-0.5 cm-0.5*\coss cm,yshift=-0.5*\sins cm]
\node[draw,fill=white] at (\poshexi{1}{0}) {$\beta_6$};
\end{scope}
\begin{scope}[yshift=-\sins cm]
\node[draw,fill=white] at (\poshex{1}{1}) {$\beta_7$};
\end{scope}
\begin{scope}[xshift=-0.5 cm-0.5*\coss cm,yshift=-0.5*\sins cm]
\node[draw,fill=white] at (\poshex{1}{1}) {$\beta_8$};
\end{scope}
\begin{scope}[yshift=-\sins cm]
\node[draw,fill=white] at (\poshexi{0}{1}) {$\beta_9$};
\end{scope}
\begin{scope}[xshift=-0.5 cm-0.5*\coss cm,yshift=-0.5*\sins cm]
\node[draw,fill=white] at (\poshexi{0}{1}) {$\beta_{10}$};
\end{scope}

\end{tikzpicture}
\end{center}
\caption{A sequence of edges in $E(B)$.}
\label{fig:edge-seq}
\end{figure}
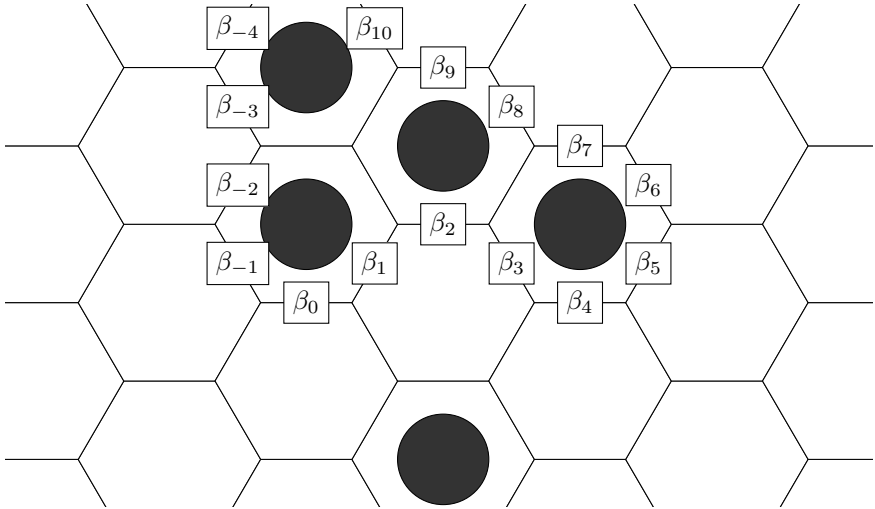

Fix some sequence $(d_n)_{n \in \Z}$ of tiles such that each $d_n$ is the northeast neighbor of $d_{n-1}$.
The \emph{northeast quarter-plane} $Q^+_n \subset T$ contains all tiles of $T$ that are obtained from $d_n$ by translations along vectors in $[0, \infty)^2$.
Symmetrically, the \emph{southwest quarter-plane} $Q^-_n$ contains translates of $d_n$ by vectors in $(-\infty, 0]^2$.
We will use ${*}$ to stand for either ${+}$ or ${-}$.
The \emph{inner border} of $Q^*_n$, denoted $\partial Q^*_n$, consists of those tiles of $Q^*_n$ that have a neighbor in $T \setminus Q^*_n$.
The vertical part of the inner border, denoted $\partial_V Q^*_n$, consists of $d_n$ and all tiles directly to the north of it.
The horizontal part is $\partial_H Q^*_n = (\partial Q^*_n \setminus \partial_V Q^*_n) \cup \{d_n\}$.
See Figure~\ref{fig:quarter} for an illustration of these definitions.
A line in $\R^2$ is called a $V$-line if it is vertical, and a $H$-line if it is horizontal.
We will use $D$ to stand for $V$ or $H$, and denote $H^{-1} = V$ and $V^{-1} = H$.

\begin{figure}[htp]
\begin{center}
\begin{tikzpicture}[scale=0.6]

\clip ($(\poshex{-3}{-3})-(0.5,0.5)$) rectangle ($(\poshexi{2}{2})+(0.4,0.5)$);

\foreach \x in {-3,-2}{
  \foreach \y in {-3,...,2}{
    \drawhex{\x}{\y}
    \drawhexi{\x}{\y}
  }
}
\foreach \x in {-1,...,2}{
  \foreach \y in {-3,-2}{
    \drawhex{\x}{\y}
    \drawhexi{\x}{\y}
  }
}
\foreach \x in {-1,...,2}{
  \foreach \y in {-1,...,3}{
    \drawhex[black!20]{\x}{\y}
    \drawhexi[black!20]{\x}{\y}
  }
}

\foreach \n [count=\i from -3] in {-2,0,...,8}{
  \node at (\poshex{\i}{\i}) {$d_{\n}$};
}
\foreach \n [count=\i from -3] in {-1,1,...,9}{
  \node at (\poshexi{\i}{\i}) {$d_{\n}$};
}

\begin{scope}[yshift=-\sins*0.7 cm]
\foreach \x in {-1,...,2}{
  \node at (\poshex{\x}{-1}) {H};
  \node at (\poshexi{\x}{-1}) {H};
}
\end{scope}
\begin{scope}[yshift=\sins*0.65 cm]
\foreach \y in {-1,...,2}{
  \node at (\poshex{-1}{\y}) {V};
}
\end{scope}

\end{tikzpicture}
\end{center}
\caption{The quarter-plane $Q^+_2$ (in gray) and its borders $\partial_H Q^+_2$ and $\partial_V Q^+_2$ (marked with H and V).}
\label{fig:quarter}
\end{figure}
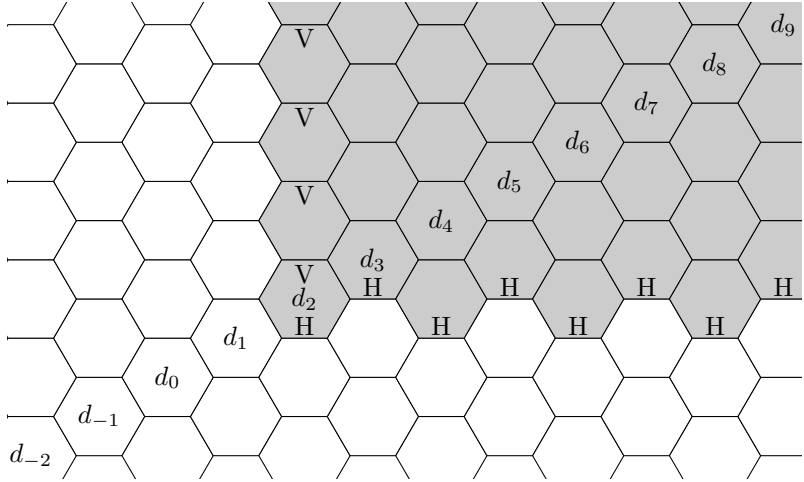

A \emph{path} is a sequence of tiles $(t_i)_{i \in I}$ for a finite or infinite interval $I \subset \Z$ such that whenever $i, i+1 \in I$, the tiles $t_i$ and $t_{i+1}$ are adjacent.
The path is \emph{black} if $t_i \in B$ for all $i \in I$, and \emph{vacant} if $t_i \in N$ for all $i \in I$.
Fix an arbitrary tile $t_0 \in T$ as the ``origin'', and for $r \geq 0$, let $B_r \subset T$ be the set of tiles that are reachable from $t_0$ by a path of length at most $r$.

A one-way infinite path $(t_i)_{i \in \N}$ \emph{reaches to ${*}$}, if for all $n$ there exists $i \in \N$ such that $t_j \in Q^*_n$ holds for all $j \geq i$.
A coloring $x$ is \emph{winning for black} if there exist black paths $(t_i)_{i \in \N}$ and $(s_i)_{i \in \N}$ with $t_0 = s_0$ such that $(t_i)$ reaches to ${+}$ and $(s_i)$ reaches to ${-}$.
This is equivalent to the existence of a single a two-way infinite black path $(t_i)_{i \in \Z}$ such that for all $n$ there exists $i \in \N$ such that $t_j \in Q^+_n$ and $t_{-j} \in Q^-_n$ for all $j \geq i$.
We call such a path a \emph{winning path} for black.
The set of colorings $x \in \{0,1\}^T$ that are winning for black is denoted $\mathcal{W}$.

\subsection{Descriptive set theory}

We recall some notions from descriptive set theory, a good general reference for which is \cite{Ke95}.
Let $X = \{0,1\}^\N$ be the Cantor space.
For each countable ordinal $\alpha \geq 1$, we define the classes $\mathbf{\Sigma}^0_\alpha$, $\mathbf{\Pi}^0_\alpha$ and $\mathbf{\Delta}^0_\alpha$ as follows:
\begin{itemize}
\item $\mathbf{\Sigma}^0_1 = \{ A \subset X : A \text{~is open} \}$.
\item $\mathbf{\Sigma}^0_\alpha = \{ \bigcup_{n \in \N} A_n : \alpha_n < \alpha, A_n \in \mathbf{\Pi}^0_{\alpha_n} \}$ for $\alpha > 1$.
\item $\mathbf{\Pi}^0_\alpha = \{ X \setminus A : A \in \mathbf{\Sigma}^0_\alpha \}$.
\item $\mathbf{\Delta}^0_\alpha = \mathbf{\Sigma}^0_\alpha \cap \mathbf{\Pi}^0_\alpha$.
\end{itemize}
These classes form the \emph{Borel hierarchy} or \emph{boldface hierarchy} of $X$.
The union of $\mathbf{\Sigma}^0_\alpha$ over all countable ordinals $\alpha$ consists of exactly the Borel subsets of $X$.

The \emph{effective Borel hierarchy} or \emph{lightface hierarchy} of the classes $\Sigma^0_\alpha$, $\Pi^0_\alpha$ and $\Delta^0_\alpha$ is defined similarly, but the unions that define the $\Sigma^0_\alpha$ should be computable.
Namely, $\Sigma^0_1$ is the class of \emph{effectively open} sets, i.e.\ sets $A = \bigcup_n A_n$, where $(A_n)_{n \in \N}$ is a computable sequence of clopen sets, and $\Sigma^0_\alpha$ consists of unions $\bigcup_n A_n$, where $(A_n, \alpha_n)_{n \in \N}$ is a computable sequence with $\alpha_n < \alpha$ and $A_n \in \Pi^0_\alpha$.\footnote{There are certain subtleties regarding the way in which the sets $A_n$ and ordinals $\alpha_n$ are encoded, but we skip them since we only handle finite ordinals.}
In particular, if a set $A \subset X$ has the form
\[
A = \{ x \in X : \forall n_1 \exists n_2 \cdots Q n_k \enspace \phi(n_1, n_2, \ldots, n_k, x) \}
\]
with $k$ alternating quantifiers and $\phi$ a computable predicate that, for each choice of the $n_i$, only depends on $x_{[0, n]}$ for some finite $n \in \N$, then $A$ is $\Pi^0_k$.
If the alternation begins with $\exists$ instead of $\forall$, then $A$ is $\Sigma^0_k$.

Let $A, B \subset X$.
A \emph{Wadge reduction} of $A$ to $B$ is a continuous function $f : X \to X$ with $f^{-1}(B) = A$.
If $f$ is computable (in the sense that some oracle Turing machine, given $x \in X$ as an oracle and $i \in \N$ as input, computes $f(x)_i$), it is a \emph{Turing reduction}.
A set $A$ is \emph{hard} for a class $\mathcal{C}$ with respect to Wadge or Turing reductions, if for every $B \in \mathcal{C}$ there exists a reduction of the appropriate type of $B$ to $A$.
If $A$ is hard for $\mathbf{\Sigma}^0_\alpha$ with respect to Wadge reductions, then it is not in $\mathbf{\Pi}^0_\alpha$, and analogously for the lightface hierarchy and Turing reductions.

\section{Upper bound}
\label{sec:upper}

In this section we prove that $\mathcal{W}$ is a $\Delta^0_5$ set by exhibiting a Boolean combination of $\Sigma^0_4$ and $\Pi^0_4$ formulas that defines it.
For this, fix an implicit coloring of $T$, and let $B \subset T$ be the black tiles and $N = T \setminus B$ be the vacant tiles.

We define a few auxiliary formulas, starting with the following.
\begin{align*}
  \psi_1(a,n,{*}) = {} & \forall m : \enspace |C(Q^*_n \cap B, a)| \geq m \\
  \phi_1 = {} & \exists t \in B \enspace \forall n \enspace \exists a \in Q^+_n \cap C(B, t), b \in Q^-_n \cap C(B, t) : \\
  {} & \psi_1(a,n,{+}) \wedge \psi_1(b,n,{-})
\end{align*}
The $\Sigma^0_4$ formula $\phi_1$ states that for some black \emph{anchor tile} $t$, for all $n$ there exists a finite black path connecting an infinite black component of $Q^+_n$ to an infinite black component of $Q^-_n$ via $t$.

\begin{remark}
  \label{rem:not-enough}
  The condition $\phi_1$ in itself is not enough to guarantee a black win.
  In fact, even the following stronger $\Sigma^0_5$ formula does not guarantee a black win:
  \begin{align*}
  \psi'_1(a,n,{*}) = {} & \forall m \enspace \exists c \in Q^*_m : \enspace c \in C(Q^*_n \cap B, a) \\
  \phi'_1 = {} & \exists t \in B \enspace \forall n \enspace \exists a \in Q^+_n \cap C(B, t), b \in Q^-_n \cap C(B, t) : \\
  {} & \psi'_1(a,n,{+}) \wedge \psi'_1(b,n,{-})
  \end{align*}
  This formula states that for some black anchor tile $t$, for all $n$ there exist a black path connecting some $a \in Q^+_n$ and $b \in Q^-_n$ via $t$, such that the connected component $C(Q^+_n \cap B, a)$ intersects every $Q^+_m$, and symmetrically for $b$.
  It implies $\phi_1$, since such a component is necessarily infinite.

  Consider the coloring of $T$ with an infinite black path coming from the southwest followed by an infinite path going directly to the north, with infinitely many paths branching to the east.
  Each branch oscillates to the north and back to the same horizontal line with increasing amplitude.
  See Figure~\ref{fig:comb-example} for an illustration.
  
  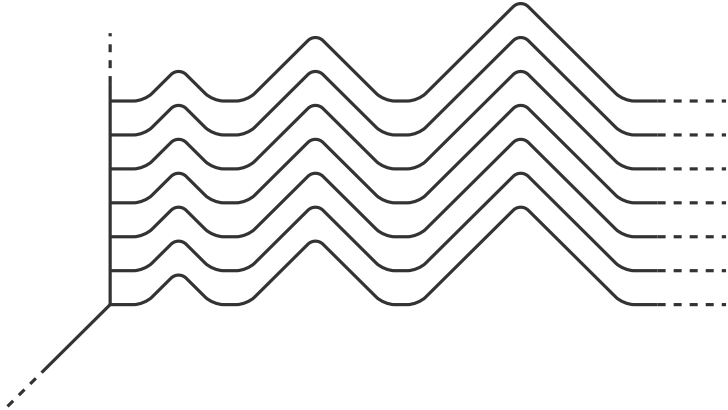
\begin{figure}[htp]
\begin{center}
\begin{tikzpicture}[scale=0.45]

\draw [very thick, black!80, dashed] (-3,-3) -- (-2,-2);
\draw [very thick, black!80] (-2,-2) -- (0,0) -- (0,6.5);
\draw [very thick, black!80, dashed] (0,6.5) -- (0,8);

\foreach \y in {0,...,6}{
  \draw [very thick, black!80, rounded corners] (0,\y) -- ++(1,0) -- ++(1,1) -- ++(1,-1) -- ++(1,0) -- ++(2,2) -- ++(2,-2) -- ++(1,0) -- ++(3,3) -- ++(3,-3) -- ++(1,0);
  \draw [very thick, black!80, dashed] (16,\y) -- ++(2,0);
}


\end{tikzpicture}
\end{center}
\caption{The formula $\phi'_1$ holds, but black does not win.}
\label{fig:comb-example}
\end{figure}
  
  Given $n$, we can choose $a$ from a branch that is wholly contained in $Q^+_n$ and $b$ on the infinite diagonal path.
  Then $\psi'_1(a,n,{+})$ holds, as the branch oscillates to intersect each $Q^+_m$, and $\psi'_1(b,n,{-})$ holds trivially.
  However, no path is winning for black, as every branch returns infinitely often to some horizontal line.
\end{remark}

We present some auxiliary results about the structure of infinite connected components of black tiles.
Together they state that the situation of Remark~\ref{rem:not-enough}, that is, a vertical or horizontal black line with infinitely many branches, is essentially the only obstruction for $\phi_1$ being equivalent to a win for black.

\begin{lemma}
\label{lem:konig}
Suppose that $\phi_1$ holds with anchor tile $t$, and let ${*} \in \{{+},{-}\}$.
Suppose that for all $n$, the set $Q^*_n \cap C(B, t)$ has only finitely many infinite connected components that intersect $\partial Q^*_n$.
Then there exists a black path $(t_i)_{i \in \N}$ with $t_0 = t$ that reaches to ${*}$.
\end{lemma}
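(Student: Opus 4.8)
The plan is a König's lemma argument on the infinite black components of the nested quarter-planes. By symmetry we treat the case ${*} = {+}$; abbreviate $C = C(B,t)$ and let $\mathcal{J}_n$ be the set of infinite connected components of $Q^+_n \cap C$.

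The crucial step, and the one I expect to be the only real obstacle, is to strengthen the hypothesis — which bounds only the infinite components meeting $\partial Q^+_n$ — into a bound on all of $\mathcal{J}_n$ for large $n$. Since $t$ is a fixed tile while $d_n$ recedes to the northeast, there is some $n_0$ with $t \notin Q^+_n$ for all $n \geq n_0$. For such $n$, I claim that every $K \in \mathcal{J}_n$ meets $\partial Q^+_n$: picking any $w \in K \subseteq C$, a black path from $t$ to $w$ must leave $T \setminus Q^+_n$ at some last tile, whose successor $v$ lies in $Q^+_n \cap B$, is joined to $w$ within $Q^+_n \cap B$ and hence lies in $K$, and has a neighbour outside $Q^+_n$; thus $v \in K \cap \partial Q^+_n$. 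Combined with the hypothesis, $\mathcal{J}_n$ is finite for every $n \geq n_0$; and unwinding $\phi_1$ and $\psi_1$ — using that a connected subset of $Q^+_n \cap B$ meeting $C$ is contained in $C$ — shows $\mathcal{J}_n \neq \emptyset$ for every $n$.

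Next I would organise $\bigsqcup_{n \geq n_0} \mathcal{J}_n$ into a forest in which the parent of $L \in \mathcal{J}_{n+1}$ is the component $\rho_n(L) \in \mathcal{J}_n$ of $Q^+_n \cap C$ that contains $L$; this is well defined and infinite because $Q^+_{n+1} \subseteq Q^+_n$, and is consistent because inclusion of components is transitive. The forest has nonempty finite levels, so it is infinite and finitely branching, and König's lemma yields an infinite branch: infinite components $L_{n_0} \supseteq L_{n_0+1} \supseteq \cdots$ with $L_n \in \mathcal{J}_n$.

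Finally I would assemble the path. Fix any $v_n \in L_n$. Since $v_{n_0} \in C$ there is a finite black path from $t$ to $v_{n_0}$, and for each $n$ there is a finite black path from $v_n$ to $v_{n+1}$ inside the connected set $L_n \subseteq Q^+_n$ (note $v_{n+1} \in L_{n+1} \subseteq L_n$). Concatenating these segments produces an infinite black path $(t_i)_{i \in \N}$ with $t_0 = t$, genuinely infinite since $v_n \in Q^+_n$ escapes every bounded region. For any $m \geq n_0$, every segment after the visit to $v_m$ lies in some $Q^+_k \subseteq Q^+_m$ with $k \geq m$, so the path remains in $Q^+_m$ from then on; for $m < n_0$ the same holds a fortiori as $Q^+_m \supseteq Q^+_{n_0}$. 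Hence $(t_i)$ reaches to ${+}$.
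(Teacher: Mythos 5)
Your proof is correct and follows essentially the same route as the paper's: a K\"onig's lemma argument on the finitely branching tree of nested infinite components of $Q^+_n \cap C(B,t)$, with $\phi_1$ guaranteeing that each level is nonempty and the hypothesis (together with the observation that the relevant components must meet $\partial Q^+_n$) guaranteeing that each level is finite. The only cosmetic difference is that the paper roots the tree at the component of $t$ in $B \cap Q^+_{n_0}$ for a suitably chosen $n_0$, whereas you attach a separate initial black path from $t$ to the start of the infinite branch.
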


\begin{proof}
We may assume ${*} = {+}$ due to symmetry.
Pick an arbitrary $n_1$, and let $a \in Q^+_{n_1} \cap C(B, t)$ be given by $\phi_1$.
Let $n_0 \leq n_1$ be the greatest integer such that $a \in C(B \cap Q^+_{n_0}, t)$.

Denote by $\mathcal{C}_n$ the set of infinite connected components of $Q^+_n \cap C(B \cap Q^+_{n_0}, t)$ that intersect $\partial Q^+_n$.
We define a directed graph $G = (V, E)$ with vertex set $V = \bigcup_{n \geq n_0} \mathcal{C}_n$.
The edge set $E$ is defined by adding an edge from $X \in \mathcal{C}_n$ to $Y \in \mathcal{C}_{n+1}$ if $Y \subset X$.
As $G$ does not contain (directed or undirected) cycles, it is a forest.

Let $X_0 = C(B \cap Q^+_{n_0}, t)$ be the connected component of $t$ in $B \cap Q^+_{n_0}$.
By our choice of $n_0$, we have $b \in X_0$.
Also, $X_0$ is clearly a connected component of $Q^+_{n_0} \cap C(B \cap Q^+_{n_0}, t)$, and because $\psi_1(a, n_1, {+})$ holds, $X_0$ is infinite.
Because of the maximality of $n_0$, it intersects $\partial Q^+_{n_0}$ as well.
Hence $X_0 \in \mathcal{C}_{n_0} \subset V$.
In fact, every $X \in V$ is a subset of $X_0$, hence $G$ is a tree and $X_0$ is its root.

Let $n \geq n_0$ be arbitrary, and choose $a_n \in Q^+_n \cap C(B, t)$ for it by $\phi_1$.
Let $X = C(Q^+_n \cap B, a_n)$ be its connected component.
Since $\psi_1(a_n, n, {+})$ holds, $X$ is infinite, and since $a_n$ is connected to $t$ via black tiles, $X$ also intersects $\partial Q^+_n$.
Hence $X \in \mathcal{C}_n \subset V$.
In particular, $G$ is an infinite tree.

Because of the assumption of the lemma, each $\mathcal{C}_n$ is finite.
Hence every vertex of $G$ has finite degree.
By König's lemma, $G$ has an infinite branch $X_0, X_1, X_2, \ldots$.
We choose a tile $t_n \in X_n$ from each of these sets with $t_0 = t$.
By construction, each $t_n$ can be connected to $t_{n+1}$ by a black path within $Q^+_n \cap B$.
The concatenation of these paths produces a one-way infinite black path that starts from $t$ and lies eventually in every $Q^+_n$, i.e.\ reaches to ${+}$.
\end{proof}

\begin{lemma}
\label{lem:comb-exists}
Suppose that $\phi_1$ holds with anchor $t$, but there does not exist a path of black tiles that starts from $t$ and reaches to ${*}$.
Then for some $n$ and $D \in \{H,V\}$, the set $\partial_D Q^*_n$ contains an infinite number of edges $\beta \in E(B)$ that are connected to $t$ by black tiles, such that the one-directional edge sequence that begins at $\beta$ and goes in the interior of $Q^*_n$ never leaves $Q^*_n$. 
\end{lemma}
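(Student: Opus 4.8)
The plan is to reduce to Lemma~\ref{lem:konig} by contraposition, pigeonhole over the two parts of the border $\partial Q^*_n$, and then carry out a planar analysis of an infinite black component confined to a quarter-plane.

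First, since $\phi_1$ holds with anchor $t$ but no black path from $t$ reaches to ${*}$, the hypothesis of Lemma~\ref{lem:konig} must fail: there is some $n$ for which $Q^*_n \cap C(B, t)$ has \emph{infinitely} many infinite connected components meeting $\partial Q^*_n$. Fix such an $n$ and list these components as $X_1, X_2, \ldots$. Several remarks about each $X_i$ will be used. It is in fact a connected component of $B \cap Q^*_n$, since a black neighbour in $Q^*_n$ of a tile of $X_i \subset C(B,t)$ again lies in $C(B,t)$, hence in $X_i$. It is connected to $t$ by black tiles, is infinite, but contains no one-way infinite black path reaching to ${*}$, because such a path could be prepended with a black path from $t$ into $X_i$, contradicting the hypothesis. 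Its complement $Q^*_n \setminus X_i$ is infinite, as it contains every $X_j$ with $j \neq i$. Finally $C(B,t) \neq X_i$: by $\phi_1$ the component $C(B,t)$ meets the opposite-type quarter-plane at every level, and for a suitable level that quarter-plane is disjoint from $Q^*_n \supseteq X_i$. Consequently $X_i$ has at least one black neighbour not in $X_i$, which must lie outside $Q^*_n$; call the tiles of $X_i$ possessing such a neighbour the \emph{bridges} of $X_i$, and note each bridge lies on $\partial Q^*_n$.

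Since $\partial Q^*_n = \partial_H Q^*_n \cup \partial_V Q^*_n$, by pigeonhole infinitely many of the $X_i$ meet $\partial_D Q^*_n$ for a single fixed $D \in \{H, V\}$; discard the others, re-index, and also discard the at most one surviving component containing $d_n$. As the $X_i$ are pairwise disjoint, it now suffices to produce, for each surviving $X_i$, an edge $\beta_i \in E(B)$ lying on $\partial_D Q^*_n$ whose black endpoint lies in $X_i$ (hence is connected to $t$) and whose inward edge-sequence never leaves $Q^*_n$: these will be infinitely many distinct good edges. To build $\beta_i$, let $c_i$ be the tile of $X_i \cap \partial_D Q^*_n$ nearest $d_n$ along that border ray, and $v_i$ the neighbouring border tile on the $d_n$-side of $c_i$; this exists since $c_i \neq d_n$, it is not in $X_i$ by minimality, and it cannot be black, since a black tile of $Q^*_n$ adjacent to $X_i$ would lie in $X_i$. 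So $v_i$ is vacant and $\beta_i := (c_i, v_i) \in E(B)$ sits on $\partial_D Q^*_n$. Now follow the edge-sequence from $\beta_i$ in the direction heading into the interior of $Q^*_n$. As long as it stays in $Q^*_n$, it traverses edges between $X_i$ and vacant tiles of $Q^*_n$, i.e.\ edges of the boundary of $X_i$ within $Q^*_n$, and this boundary is infinite because $X_i$ and $Q^*_n \setminus X_i$ are both infinite while removing any finite set of tiles from $Q^*_n$ leaves a single infinite component. Thus the only way the inward edge-sequence could leave $Q^*_n$ is by reaching a bridge of $X_i$; the remaining claim is that, started from $\beta_i$ inward, it never reaches one, so it runs forever along the infinite boundary of $X_i$ inside $Q^*_n$.

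This last claim is the heart of the matter and the step I expect to be the main obstacle: bridges of $X_i$ are forced to exist, yet we must show the $B$-boundary contour through $\beta_i$, traversed away from $d_n$ and into $Q^*_n$, winds around the confined infinite region $X_i$ without ever returning to the border at a bridge tile — on either $\partial_D Q^*_n$ or $\partial_{D^{-1}} Q^*_n$. The bookkeeping uses the orientation of the contour at $c_i$ together with the choice of $c_i$ as the border tile of $X_i$ closest to $d_n$, so that the inward direction moves away from the corner, and the fact that $X_i$ is joined to the rest of $C(B,t)$ only through bridge tiles lying on $\partial Q^*_n$; a clean way to package it is to argue that the contour through $\beta_i$ is bi-infinite and that its inward half, being squeezed between $X_i$ and $Q^*_n \setminus X_i$ and unable to reach a bridge, is trapped in $Q^*_n$. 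Granting this geometric lemma, the $\beta_i$ are as required, giving infinitely many edges with the stated properties.
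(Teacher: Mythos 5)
Your proposal follows the same route as the paper: contrapose Lemma~\ref{lem:konig} to obtain an $n$ with infinitely many infinite components $X_i$ of $Q^*_n\cap B$ meeting $\partial Q^*_n$, pigeonhole on $D$, and take for each $X_i$ the edge $\beta_i=(c_i,v_i)$ at the border tile of $X_i$ nearest the corner. The paper does exactly this and then dispatches the remaining step in one sentence (``since $X$ is infinite, the edge sequence that continues to the interior of $Q^*_n$ must be infinite''). The step you flag as ``the heart of the matter'' and explicitly do not prove is therefore precisely where the content of the lemma lies, and as written your proposal is incomplete: you have correctly reduced everything to the claim that the inward contour from $\beta_i$ never escapes through a bridge, but you offer no argument for it.

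Worse, the claim is false in the generality you assert it (for \emph{every} surviving $X_i$). Take $D=V$ and let $X$ consist of a single tile $c$ on $\partial_V Q^+_n$, a black column descending from $c$'s eastern neighbour down to $\partial_H Q^+_n$, an infinite black column going north one step further east, and a black bridge just below the bottom of the descending column. The inward contour from $(c,v)$ runs down the west side of the descending column, follows $B$ out through the bridge, and leaves $Q^+_n$. What rescues the lemma is that this can happen for at most finitely many of the $X_i$ (essentially only the one nearest the corner), which still leaves infinitely many good edges. The missing argument is a planar separation argument: if the inward contour from $\beta_i$ reaches an edge with both tiles outside $Q^*_n$, its finite initial segment together with the stretch of the planar boundary of $Q^*_n$ between the two crossing points encloses a bounded region. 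Since the black tiles of the contour inside $Q^*_n$ all lie in $X_i$, and $X_i$ meets $\partial_D Q^*_n$ only at or beyond $c_i$, an exit through a bridge on the far side of $c_i$ would trap the infinite set $X_i$ in that bounded region --- impossible; so the exit must occur on the corner side of $\beta_i$, and then the bounded region traps every $X_j$ whose border tile $c_j$ lies between $v_i$ and the exit point, which is again impossible whenever such a $j$ exists. Some version of this argument (with care about contours that graze $\partial Q^*_n$) is what your write-up needs, and is also what the paper's own one-line justification leaves implicit; without it the proof is not complete.
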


\begin{proof}
Because of Lemma~\ref{lem:konig}, there exists $n$ such that $Q^*_n \cap C(B, t)$ has infinitely many infinite connected components that intersect $\partial Q^*_n$, each of which is also a component of $Q^*_n \cap B$.
For some $D \in \{H,V\}$, infinitely many of these components intersect $\partial_D Q^*_n$.
Each such component $X$ (except possibly one that contains the corner of $Q^*_n$) contains a tile $a \in \partial_D Q^*_n$ that is closest to the corner of $Q^*_n$ and part of an edge $\beta = (a,b) \in E(B)$ with $b \in \partial_D Q^*_n$.
Since $X$ is infinite, the edge sequence $(\beta_i)$ that begins at $\beta$ and continues to the interior of $Q^*_n$ must be infinite.
\end{proof}

Next, we define the formula
\begin{align*}
  \phi_2({*}) = {} & \forall n \enspace \exists r \enspace \forall (a,b) \in E(B) : \\
  {} & a, b \in \partial Q^*_n \setminus B_r \implies \\
  {} & \exists i < 0, j > 0 : \enspace a_i, b_i, a_j, b_j \notin Q^*_n
\end{align*}
This $\Pi^0_4$ formula states that for all $n$, all but finitely many edge sequences that cross the border of $Q^*_n$ will leave $Q^*_n$ after finitely many steps, in either direction.
Recall that $B_r$ is the ball of radius $r$ around the ``origin tile'' in the path metric, and here it just allows us to ignore a finite subset of the edge sequences.
The formula $\phi_2({*})$ is equivalent to the condition that for all $n$, there are only finitely many infinite connected components of $Q^*_n \cap B$ that intersect $\partial Q^*_n$, but we do not actually need this fact.

The following lemma is quite technical, but we use it several times.

\begin{lemma}
\label{lem:no-room}
Suppose that $\phi_2({*})$ does not hold: for some $n$ and $D \in \{H,V\}$, there are infinitely many edges $\beta \in E(B)$ contained in $\partial_D Q^*_n$ such that the one-directional edge sequence that begins at $\beta$ and and goes inside $Q^*_n$ never leaves $Q^*_n$.
Then each of these edge sequences intersects any given $D$-line only finitely many times.
\end{lemma}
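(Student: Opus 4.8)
The plan is to argue by contradiction using planar topology, with the confinement hypothesis carrying essential weight. By the central symmetry of the configuration we may take ${*}={+}$, and it is enough to treat $D=V$, the case $D=H$ being the same with the two axes exchanged. Then $\partial_V Q^+_n$ is the westmost column of tiles of $Q^+_n$; each edge $\beta=(a,b)$ of the hypothesis has $a\in B$ and $b\notin B$ vertically adjacent in that column, and its confined edge sequence $\sigma$ runs eastward into $Q^+_n$ and never leaves it. Suppose, for contradiction, that one such $\sigma$ meets a vertical line $\ell$ infinitely often; if $\ell$ does not lie strictly east of the westmost column this reduces to a direct check using confinement, so assume it does. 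The goal is to show that then only finitely many edges $\beta$ of the hypothesis can lie on $\partial_V Q^+_n$, contradicting that there are infinitely many.

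First I would record some structure. At every vertex the interface between $B$ and its complement has degree $0$ or $2$, so each edge sequence is a simple path using no edge twice; a simple path confined to $Q^+_n$, which contains only finitely many tiles in any ball, is unbounded, so $\sigma$ escapes to infinity inside $Q^+_n$, and since it uses each edge once it meets $\ell$ at arbitrarily large heights. The crossings of $\ell$ split $\sigma$ into its \emph{passes} (maximal finite subarcs weakly west of $\ell$, the first beginning at $\beta$) and its \emph{excursions} (maximal finite subarcs weakly east of $\ell$), and there are passes and excursions at arbitrarily large heights; each one, together with a segment of $\ell$, bounds a bounded region of the plane. Writing $X$ for the black component along whose boundary $\sigma$ runs, every tile of $X$ incident to $\sigma$ lies in $Q^+_n$, and the black endpoints of the crossing edges show $X$ contains tiles within a bounded horizontal distance of $\ell$ at arbitrarily large heights. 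Finally, confinement of any edge sequence anchored on the westmost column forbids its component from running up that column forever: the north- and south-pointing edges of a westmost-column tile connect to edges leaving $Q^+_n$, so if the component occupied the column indefinitely those edges would land on its confined boundary, which is impossible.

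Now the ``no room'' step. Let $\sigma'$ be any other confined edge sequence of the hypothesis, anchored at $\beta'\in\partial_V Q^+_n$, with black component $X'$; if $X'\neq X$ it is disjoint from and non-adjacent to $X$. By the last fact $X'$ cannot remain in the westmost column, so it has a tile east of $\ell$, and being connected it must cross $\ell$; but it cannot cross $\ell$ within the height band of a pass or an excursion of $\sigma$ without meeting $\sigma$ (hence $X$), and everything outside all such bands lies below the bottom of some pass. Since passes and excursions occur at arbitrarily large heights and each bounds only a bounded region together with $\ell$, an $X'$ whose anchor $\beta'$ is high enough is boxed into one of these bounded regions — impossible, as $X'$ is infinite. (When $X'=X$, one uses instead that the passes of $\sigma$ itself cut the westmost column above some height into bounded pieces.) Hence all but finitely many of the hypothesised edges $\beta$ lie below a fixed height on $\partial_V Q^+_n$, and there are only finitely many such tiles — the contradiction we wanted.

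The main obstacle is the precise form of the boxing-in step: one must show that the pass and excursion loops of the simple confined curve $\sigma$, accumulating as the height grows, really do seal the part of $Q^+_n$ near $\ell$ from escape, rather than $\sigma$ oscillating narrowly about $\ell$ and leaving corridors along which $X'$ (or $X$) could reach infinity. Making this rigorous requires analysing how a simple curve confined to $Q^+_n$ can revisit a finite-width vertical strip, using that $X$ sits on one fixed side of $\sigma$ and occupies tiles arbitrarily high up just beside $\ell$, so that its loops must topologically enclose, or reach across to, the west boundary of $Q^+_n$. Confinement is indispensable here: the comb of Remark~\ref{rem:not-enough} oscillates across vertical lines, but the boundary edge sequences of such a comb repeatedly leave $Q^*_n$, so the lemma says nothing about them; and a little extra bookkeeping is needed when several of the $\beta$ lie on the boundary of a single component.
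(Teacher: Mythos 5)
Your overall strategy---reduce to ${*}={+}$, $D=V$, suppose some confined edge sequence $\sigma$ meets a vertical line $\ell$ infinitely often (hence at arbitrarily large heights), and use $\sigma$ as a barrier that traps the other confined sequences anchored on $\partial_V Q^+_n$---is the same as the paper's. But the step you yourself flag as the main obstacle, the ``boxing-in,'' is a genuine gap, and the purely topological version of it that you sketch is false. A pass of $\sigma$ with both endpoints on $\ell$ is a finite arc lying in the strip between $\partial_V Q^+_n$ and $\ell$, but nothing forces it to reach across to $\partial_V Q^+_n$: it can be a shallow pocket hugging $\ell$. The pockets bounded by passes and excursions therefore need not seal the strip above any height, and a sequence $\sigma'$ anchored arbitrarily high on $\partial_V Q^+_n$ is \emph{not} confined to a bounded region---it can run north forever through the corridor between $\partial_V Q^+_n$ and $\ell$ without ever meeting $\sigma$ or crossing $\ell$. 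This is exactly the configuration drawn in Figure~\ref{fig:no-room}, where the sequences $\beta^{k+1},\dots,\beta^{k+d}$ do escape to infinity inside that corridor. So considering a single additional high-anchored sequence yields no contradiction, however high its anchor.

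The missing idea is quantitative rather than topological: the corridor between $\partial_V Q^+_n$ and $\ell$ has a fixed finite width $d$, measured in columns of tiles, and pairwise non-crossing infinite edge sequences that are all forced to escape through it cannot number as many as $d$. The paper's proof is built around exactly this count. It orders the hypothesised edges $\beta^k$ by distance from the corner, takes the $d$ edges $\beta^{k+1},\dots,\beta^{k+d}$ immediately north of $\beta^k$, and uses the assumption that $\beta^k$'s sequence visits $\ell$ infinitely often only to extract a \emph{single} visit to $\ell$ north of a line $\ell'$ lying above all of these anchors; the initial arc of $\beta^k$'s sequence up to that visit, together with $\partial Q^+_n$, then forces all $d$ confined sequences through the width-$d$ cross-section of the corridor, which is impossible by pigeonhole. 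No pass/excursion decomposition is needed. Your supporting observations (confinement forces each sequence to be unbounded inside $Q^+_n$; a component anchored on $\partial_V Q^+_n$ cannot run up the border column forever; distinct sequences do not cross) are correct and in the right spirit, but without the width count the argument does not close.
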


\begin{proof}
By symmetry, we may assume ${*} = {+}$ and $D = V$.
Let $\beta^k$ for $k \in \N$ be the edges given by the assumption, ordered by distance to the corner of $Q^+_n$, and let $(\beta^k_i)_i$ be the associated edge sequences.

Suppose for a contradiction that one of the edge sequences $(\beta^k_i)_i$ visits some vertical line $\ell$ infinitely often.
Let $d$ be the distance between $\partial_V Q^+_n$ and $\ell$, measured in columns of tiles.
Consider the next $d$ edges $\beta^{k+1}, \ldots, \beta^{k+d}$ to the north of $\beta^k$, and let $\ell'$ be a horizontal line to the north of $\beta^{k+d}$.

There exists an $i$ such that tile $\beta^k_i$ of the edge sequence of $\beta^k$ intersects $\ell$ somewhere to the north of $\ell'$.
Now the $d$ infinite edge sequences $(\beta^{k+p}_j)_j$ for $p = 1, \ldots, d$ have to fit through the width-$d$ gap between $\partial Q^+_n$ and $\ell$, which is impossible.
See Figure~\ref{fig:no-room} for an illustration.
Hence $(\beta^k_i)_i$ visits each vertical line only finitely many times.
\end{proof}

\begin{figure}[htp]
\begin{center}
\begin{tikzpicture}[xscale=1.5]

\clip (-1,-2) rectangle (4,5);

\draw [thick, black!80, rounded corners] (0.5,-0.7) -- (3.1,-0.7) -- (3.1,3.5) -- (1.5,3.5) -- (1,3.7) -- (1,4.3) -- (1.5,4.5) -- (6,4.5) -- (6,5);

\draw [thick, black!80, rounded corners] (0.5,-0.2) -- (2.7,-0.2) -- (2.7,3) -- (0.9,3.5) -- (0.9,6);
\draw [thick, black!80, rounded corners] (0.8,6) -- (0.8,3.3) -- (2.5,2.9) -- (2.5,0.3) -- (0.5,0.3);
\draw [thick, black!80, rounded corners] (0.5,0.8) -- (2,0.8) -- (2,2.9) -- (0.7,3.2) -- (0.7,6);
\draw [thick, black!80, rounded corners] (0.6,6) -- (0.6,3.1) -- (1.8,2.8) -- (1.8,1.3) -- (0.5,1.3);
\draw [thick, black!80, rounded corners] (0.5,1.8) -- (1.5,1.8) -- (1.5,2.5) -- (0.7,2.8);

\draw (0.5,5) -- (0.5,-1) -- (7,-1);
\draw [densely dotted] (1,5) -- (1,-1.5);
\draw [densely dotted] (0,2.5) -- (7,2.5);

\node [left] at (0.5,4.2) {$\partial_V Q^+_n$};
\node [right] at (1,4) {$\beta^k_i$};
\node [left] at (0.5,-0.7) {$\beta^k$};
\node [left] at (0.5,-0.2) {$\beta^{k+1}$};
\node [left] at (0.5,0.3) {$\beta^{k+2}$};
\node [left] at (0.5,0.8) {$\beta^{k+3}$};
\node [left] at (0.5,1.3) {$\beta^{k+4}$};
\node [left] at (0.5,1.8) {$\beta^{k+d}$};
\node [below] at (1,-1.5) {$\ell$};
\node [left] at (0,2.5) {$\ell'$};

\end{tikzpicture}
\end{center}
\caption{The edge sequences $(\beta^{k+p}_j)_j$ are not able to squeeze between $\partial_V Q^+_n$ and $\ell$.}
\label{fig:no-room}
\end{figure}
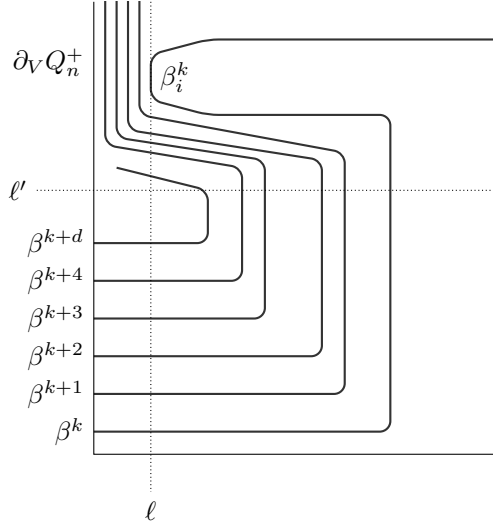

Finally, we define
\begin{align*}
  \phi_3({*}) = {} & \exists (a,b) \in E(B) \enspace \forall n \enspace \exists k : \\
  {} & (\forall i \geq 0 : a_{k+i} \in Q^*_n) \vee (\forall i \geq 0 : a_{k-i} \in Q^*_n)
\end{align*}
This $\Sigma^0_4$ formula states that there exists a black edge such that the path it defines eventually stays in each quarter-plane $Q^*_n$ (and in particular is infinite).

\begin{lemma}
\label{lem:no-border}
Suppose that black wins.
Then $\phi_2({*}) \vee \phi_3({*})$ holds for each ${*} \in \{{+},{-}\}$.
\end{lemma}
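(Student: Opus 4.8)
Throughout take ${*}={+}$; the case ${*}={-}$ is symmetric. Since $\phi_2({+})\vee\phi_3({+})$ holds trivially when $\phi_2({+})$ does, the plan is to assume that black wins \emph{and} that $\phi_2({+})$ fails, and to produce an edge of $E(B)$ witnessing $\phi_3({+})$.

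First I would extract the data. Black winning yields a black path $P=(t_i)_{i\ge 0}$ that reaches to ${+}$, i.e.\ eventually lies in every $Q^+_n$; put $X=C(B,t_0)$, an infinite black component containing a path reaching to ${+}$. The failure of $\phi_2({+})$ supplies $n_0$, $D\in\{H,V\}$ and infinitely many edges $\beta^k\in E(B)$ on $\partial_D Q^+_{n_0}$ whose inward edge sequences $\gamma^k$ never leave $Q^+_{n_0}$ — this is exactly the hypothesis of Lemma~\ref{lem:no-room}, so each $\gamma^k$ meets every $D$-line only finitely often, and moreover these $\beta^k$ exhibit infinitely many distinct infinite connected components of $Q^+_{n_0}\cap B$ meeting $\partial Q^+_{n_0}$. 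I would then record the structural fact that $X$ contains no full quarter-plane $Q^+_m$: if $Q^+_m\subseteq X$, then for every $n$ the set $Q^+_n\cap B$ would have only finitely many infinite components — the one containing $Q^+_{\max(n,m)}$ together with at most finitely many more confined to the two bounded-width arms of the L-shaped region $Q^+_n\setminus Q^+_m$ — contradicting the infinitude at level $n_0$. Hence every $Q^+_m$ contains a tile outside $X$; and since $X$ is a connected component of $B$, each edge of $X$ has a vacant outer tile, so $E(X)\subseteq E(B)$ and $X$ has boundary edges of arbitrarily large ${+}$-depth.

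The crux is to show that the robust path $P$ drags a single boundary edge sequence of $X$ — hence an edge sequence of $E(B)$ — into every $Q^+_n$. Each bi-infinite $\gamma^k$ is a properly embedded plane curve, the connected black set $X\supseteq P$ lies wholly on one side of it, and because $\gamma^k$ crosses every $D$-line only finitely often the side carrying the arbitrarily ${+}$-deep tiles of $P$ is forced; here I would run a short case analysis on the asymptotics of $\gamma^k$ (a genuinely ${+}$-reaching $\gamma^k$ already gives $\phi_3({+})$; a $\gamma^k$ that runs off straight at bounded height, or that becomes confined to bounded columns, is a barrier $P$ cannot cross, and the requirement that $P$ still surface in every $Q^+_n$ contradicts the fact that the $\beta^k$ march off to infinity along $\partial_D Q^+_{n_0}$). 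Cleanly, I expect the finish to be: let $O$ be the connected component of the complement in $\R^2$ of the union of the closed hexagons of the tiles of $X$ that contains points of arbitrarily large ${+}$-depth — it is nonempty because $X\not\supseteq Q^+_m$ for all $m$, and it reaches to ${+}$ (rather than merely dipping deep through a channel that turns back) precisely because $P$ does so robustly. Then $O$ borders $X$ along edges whose outer tile is vacant, so $\partial O$ is an edge sequence in $E(B)$, and one of its rays reaches to ${+}$.

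The step I expect to be the main obstacle is exactly this passage to $O$: ruling out the degenerate picture of Remark~\ref{rem:not-enough}, in which $X$ reaches ${+}$ only through fingers that turn back, so that no single complementary region, hence no single boundary edge sequence, reaches to ${+}$. Both hypotheses enter here essentially — ``black wins'' upgrades ``$X$ is arbitrarily deep'' to ``$X$ contains a path that \emph{stays} arbitrarily deep'', which is what forces a complementary region to reach ${+}$ robustly, while $\neg\phi_2({+})$, via Lemma~\ref{lem:no-room}, provides the infinitely many pairwise disjoint barriers $\gamma^k$ that cannot all be squeezed between $P$ and $\partial Q^+_{n_0}$, so $X$ cannot seal off the ${+}$ direction.
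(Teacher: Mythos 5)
You have assembled the right raw materials --- the failure of $\phi_2({+})$ yields infinitely many inward edge sequences $\gamma^k$ starting on $\partial_D Q^+_{n_0}$, Lemma~\ref{lem:no-room} kills their recurrence to $D$-lines, and a winning path cannot cross an edge sequence of $E(B)$ --- but the decisive step is exactly the one you flag as ``the main obstacle'' and leave open, so the argument is not closed. Two concrete problems. First, your case analysis on the asymptotics of $\gamma^k$ (``reaches ${+}$ / runs off straight at bounded height / confined to bounded columns'') is not exhaustive: after Lemma~\ref{lem:no-room} the remaining bad case is a $\gamma^k$ that leaves every vertical line but returns to some fixed \emph{horizontal} line infinitely often while spiking arbitrarily high in between --- precisely the comb of Remark~\ref{rem:not-enough} --- and this is the case that actually has to be handled. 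Second, your proposed finish via the complementary region $O$ of $X = C(B,t_0)$ is both harder than necessary and aimed at the wrong target: $\phi_3({*})$ only asserts the existence of \emph{some} edge sequence that eventually stays in every $Q^*_n$; it need not lie on the boundary of the component of the winning path, so there is no need to analyze which complementary region of $X$ ``reaches to ${+}$.''

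The missing idea is a simple selection. Let $t_{i_0}$ be the first tile from which the winning path stays in $Q^+_{n_0}$ forever. Since the starting edges $\beta^k$ march off to infinity along $\partial_V Q^+_{n_0}$, choose one lying strictly north of $t_{i_0}$. The tail of the winning path is then trapped in the corner-side component of $Q^+_{n_0}$ cut off by $\gamma^k$. Now the dichotomy is on horizontal lines only: if $\gamma^k$ met some horizontal line infinitely often, the trapped tail could not eventually stay in every $Q^+_m$, contradicting the assumption that black wins; hence $\gamma^k$ leaves every horizontal line, and by Lemma~\ref{lem:no-room} every vertical line, while never leaving $Q^+_{n_0}$ --- so $\gamma^k$ itself is the witness for $\phi_3({+})$. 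This is a two-line finish once the right $\beta^k$ is chosen, and it makes the detour through $X$, its non-containment of quarter-planes, and the region $O$ unnecessary.
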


\begin{proof}
Let $(t_i)_{i \in \Z}$ be the winning path for black.
We assume that $\phi_2({+})$ does not hold, and prove $\phi_3({+})$; the case for ${*} = {-}$ is symmetric.
Hence, for some $n$ and $D \in \{H,V\}$, the border $\partial_D Q^+_n$ contains infinitely many edges $\beta^k \in E(B)$ such that the one-way infinite edge sequence $(\beta^k_i)_i$ that goes inside $Q^+_n$ never leaves $Q^+_n$.
Again by symmetry, we may assume $D = V$.

Let $i_0 \in \Z$ be the lowest index such that $t_j \in Q^+_n$ for all $j \geq i_0$.
Choose a $\beta^k$ that intersects $\partial Q^+_n$ to the north of $t_{i_0}$.
By Lemma~\ref{lem:no-room}, the sequence $(\beta^k_i)_i$ visits any given vertical line only finitely many times.
It cannot visit any horizontal line infinitely many times, as this would prevent $(t_j)_{j \geq i_0}$ from reaching to ${+}$.
Hence it is a witness for $\phi_3({+})$.
\end{proof}

We claim that the following first order formula characterizes the winning condition for black:
\[
  \phi_4 = \phi_1 \wedge (\phi_2({+}) \vee \phi_3({+})) \wedge (\phi_2({-}) \vee \phi_3({-}))
\]
As a Boolean combination of $\Sigma^0_4$ and $\Pi^0_4$ formulas, $\phi_4$ defines a $\Delta^0_5$ set of colorings of $T$.

\begin{lemma}
The formula $\phi_4$ is equivalent to a black win.
\end{lemma}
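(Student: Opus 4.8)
We prove the two implications separately. The direction ``black wins $\Rightarrow \phi_4$'' is short. Let $(t_i)_{i\in\Z}$ be a winning path and use $t_0$ as the anchor for $\phi_1$. Given $n$, pick $i_+$ with $t_j \in Q^+_n$ for all $j \ge i_+$; then $a := t_{i_+}$ lies in $Q^+_n \cap C(B,t_0)$, being joined to $t_0$ along the path, and $C(Q^+_n \cap B, a)$ contains the infinite set $\{t_j : j \ge i_+\}$, so $\psi_1(a,n,{+})$ holds. Choosing $b$ symmetrically from the southwest tail gives $\psi_1(b,n,{-})$, so $\phi_1$ holds with anchor $t_0$. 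The remaining conjuncts $\phi_2({*}) \vee \phi_3({*})$ for ${*}\in\{{+},{-}\}$ are exactly the content of Lemma~\ref{lem:no-border}, so $\phi_4$ holds.

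For the converse, assume $\phi_4$ and fix an anchor $t$ witnessing $\phi_1$. It suffices to prove the following \emph{Claim}: if $\phi_1$ holds with anchor $t$ and $\phi_2({*}) \vee \phi_3({*})$ holds, then there is a black path $(u_i)_{i\in\N}$ with $u_0 = t$ that reaches to ${*}$. Indeed, applying the Claim with ${*}={+}$ and with ${*}={-}$ produces two black paths starting at $t$, one reaching ${+}$ and one reaching ${-}$, which by definition witness a black win.

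We prove the Claim by contraposition: suppose $\phi_1$ holds with anchor $t$ but no black path from $t$ reaches to ${*}$, and deduce $\neg\phi_2({*}) \wedge \neg\phi_3({*})$. By Lemma~\ref{lem:comb-exists} there are $n_0$ and $D\in\{H,V\}$ so that $\partial_D Q^*_{n_0}$ carries infinitely many edges $\beta^k=(a^k,b^k)\in E(B)$, each joined to $t$ by black tiles, whose inward edge sequence $(\beta^k_i)_i$ never leaves $Q^*_{n_0}$. For each such edge the implication inside $\phi_2({*})$ fails, and since the $\beta^k$ cannot all lie in a fixed ball $B_r$, this already gives $\neg\phi_2({*})$. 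Because $\phi_2({*})$ fails, Lemma~\ref{lem:no-room} applies: each $(\beta^k_i)_i$ meets any given $D$-line only finitely often, so — being confined to $Q^*_{n_0}$ and attaining each $D$-coordinate only finitely often — it escapes to infinity transverse to $\partial_D Q^*_{n_0}$. Thus each $\beta^k$ roots a ``finger'' of $C(B,t)$ that runs out to infinity inside $Q^*_{n_0}$, and, as in the proof of Lemma~\ref{lem:comb-exists}, distinct fingers lie in distinct infinite components of $Q^*_{n_0}\cap B$, whose base points march off to infinity along $\partial_D Q^*_{n_0}$. It remains to refute $\phi_3({*})$. Suppose some $(a,b)\in E(B)$ witnessed it. A pigeonhole argument on the inner ``$\exists k$'' shows that one fixed direction of $(a_i)_{i\in\Z}$, after deleting repetitions, is a black path with a tail in every $Q^*_m$, hence reaching to ${*}$; it lies in the component $C(B,a)$. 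If $C(B,a)=C(B,t)$, prepending a black path from $t$ to $a$ inside $C(B,t)$ would give a black path from $t$ reaching to ${*}$, against our assumption; so $C(B,a)\ne C(B,t)$, and a tail of $(a_i)$ lies in an infinite component $Y$ of $Q^*_{n_0}\cap B$ distinct from every finger. Since $(a_i)$ reaches to ${*}$, this tail, and hence $Y$, has both coordinates unbounded inside $Q^*_{n_0}$. But $Y$ is disjoint from the infinitely many fingers of $C(B,t)$, and a planarity argument (sketched below) shows that a family of pairwise disjoint connected subsets of $Q^*_{n_0}\cap B$, each escaping transverse to $\partial_D Q^*_{n_0}$ and based arbitrarily far along it, separates $Q^*_{n_0}$ into regions each unbounded only in the transverse direction; no connected set avoiding all of them can have both coordinates unbounded. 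This contradicts $Y$, so $\phi_3({*})$ fails, completing the contraposition.

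The main obstacle is exactly this last planarity step: converting ``infinitely many pairwise disjoint fingers, each running off to transverse infinity and anchored arbitrarily far along $\partial_D Q^*_{n_0}$'' into a genuine separation statement, while allowing the fingers to wander in the non-transverse direction, since Lemma~\ref{lem:no-room} controls only their transverse coordinate (in the comb of Remark~\ref{rem:not-enough} the fingers themselves already oscillate unboundedly). A workable route is to replace each finger by the bi-infinite curve of vacant edges bounding it on the side away from the border, observe via Lemma~\ref{lem:no-room} that between two consecutive such curves only finitely many tiles of $Q^*_{n_0}$ lie in any one transverse line, and conclude that ``beyond all the curves'' there is nothing; a connected black $Y$ with both coordinates unbounded would have to cross one of these vacant curves, which is impossible. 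The genuinely delicate points are handling the degenerate cases where a finger contains the corner tile $d_{n_0}$, and verifying that the fingers do not nest so as to destroy the ``consecutive pockets'' picture — for which one uses that a nesting finger would itself reach to ${*}$ and thereby, being in $C(B,t)$, contradict the standing assumption.
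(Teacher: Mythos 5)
Your forward direction and the overall skeleton of the converse (contraposition via Lemmas~\ref{lem:comb-exists} and~\ref{lem:no-room}, producing infinitely many ``finger'' edge sequences attached to $t$ inside $Q^*_{n_0}$ and concluding $\neg\phi_2({*})$) match the paper. The genuine gap is in your refutation of $\phi_3({*})$. Your plan is to show that the fingers separate $Q^*_{n_0}$ so that \emph{no} connected black set disjoint from them can reach to ${*}$, and the intermediate statement you invoke --- that the complementary regions are ``unbounded only in the transverse direction'' --- is false: in the comb of Remark~\ref{rem:not-enough} (Figure~\ref{fig:comb-example}) the pocket between two consecutive branches is unbounded in \emph{both} coordinates, because the upper branch oscillates to arbitrary heights before returning to its base line. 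Lemma~\ref{lem:no-room} controls only the transverse ($D$-line) coordinate of each finger, exactly as you note, so your ``between two consecutive curves only finitely many tiles lie in any one transverse line'' observation does not by itself pinch the pockets in the $D^{-1}$ direction, which is the direction that matters for reaching to ${*}$. You flag this as the main obstacle and leave it as a sketch with acknowledged delicate points (nesting, the corner tile), so the proof is not complete as written.

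The paper sidesteps the separation statement entirely with a sandwich argument that you should compare against: take the $\phi_3({*})$ witness $\beta$, whose tail eventually lies in $Q^*_{n_0}$ and hence sits in the corner component of the complement of \emph{some} finger $(\beta^k_i)_i$. Since that tail reaches to ${*}$ while being trapped below the finger, the finger cannot return to any $D^{-1}$-line infinitely often (it would block the witness), and Lemma~\ref{lem:no-room} already rules out infinitely many returns to any $D$-line; hence the finger itself yields a black path from $t$ reaching to ${*}$, contradicting the standing assumption. This uses the witness only through the one finger lying above it, avoids any global statement about all pockets, and needs no case split on whether $C(B,a)=C(B,t)$. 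If you want to salvage your route, you would at minimum need the dichotomy ``either some finger reaches to ${*}$ (done) or every finger returns to a fixed $D^{-1}$-line infinitely often,'' and then prove the pinching claim for the latter case --- which is essentially the paper's argument localized to one pocket, so you may as well adopt it directly.
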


\begin{proof}
Clearly a black win implies $\phi_1$, as we can choose $a$, $b$ and $c$ from the winning path.
By Lemma~\ref{lem:no-border}, a black win implies $\phi_2({+}) \vee \phi_3({+})$ and $\phi_2({-}) \vee \phi_3({-})$ as well.
Thus a black win implies $\phi_4$.

Conversely, suppose for a contradiction that $\phi_4$ holds with anchor tile $t \in B$ but black does not win.
By symmetry, we may assume there does not exist a path of black cells that starts from $t$ and reaches to ${+}$.

By Lemma~\ref{lem:comb-exists}, for some $n_0$ and $D \in \{H,V\}$ there exist infinitely many one-way infinite edge sequences $(\beta^k_i)_i$, each of which originates from $\partial_D Q^+_{n_0}$, never leaves $Q^+_{n_0}$ and is connected to $t$ by black tiles.
By symmetry, we may assume $D = V$.
In particular, $\phi_2({+})$ does not hold, so $\phi_3({+})$ must hold: there exists an edge sequence $\beta = (\beta_i)_{i \in \Z}$ and $s = \pm 1$ such that for all $n$ we have $\beta_{s i} \in Q^+_n$ for all large enough $i$.
Note that this does not immediately give a contradiction, since the anchor $t$ might not be connected to $\beta$ by black tiles.

Eventually $\beta_{s i}$ lies in $Q^+_{n_0}$.
Thus, it is below one of the edge sequences given by Lemma~\ref{lem:comb-exists}, say $(\beta^k_i)_i$, in the sense that $(\beta^k_i)_i$ separates $Q^+_{n_0}$ into two components and each edge $\beta_{s i}$ is in the component that contains the corner of $Q^+_{n_0}$.
By Lemma~\ref{lem:no-room}, the sequence $(\beta^k_i)_i$ visits any given vertical line only finitely many times, and since $(\beta_{s i})_{i \in \N}$ reaches to ${+}$, $(\beta^k_i)_i$ visits any given horizontal line only finitely often too.
As $\beta^k$ is connected to $t$ by black tiles, we have constructed a black path that starts at $t$ and reaches to ${+}$, which is a contradiction.
Thus $\phi_4$ implies a black win.
\end{proof}

Theorem~\ref{thm:upper-bound} follows immediately.

\section{Lower bound}
\label{sec:lower-bound}

In this section we prove Theorem~\ref{thm:lower-bound}.
More explicitly, we show that $\mathcal{W}$ is $\Sigma^0_4$-hard in the lightface hierarchy in terms of Turing reductions.
By relativizing the reduction, $\mathbf{\Sigma}^0_4$-hardness in the boldface hierarchy in terms of Wadge reductions follows.

Consider an arbitrary set
\[
X = \bigcup_{a \in \N} \bigcap_{b \in \N} \bigcup_{c \in \N} \bigcap_{d \in \N} X(a,b,c,d)
\]
in the lightface $\Sigma^0_4$ class, so that each $X(a,b,c,d) \subset \{0,1\}^\N$ is a clopen set computable from the numbers $a$, $b$, $c$ and $d$.
Given a sequence $x \in \{0,1\}^\N$, we will effectively construct a coloring $y = f(x) \in \{0,1\}^T$ such that $y \in \mathcal{W}$ if and only if $x \in X$.
As a first step, we note that one can produce clopen sets $X(a,b,c)$ computable from the parameters such that $\bigcup_{c \in \N} \bigcap_{d \in \N} X(a,b,c,d) = \bigcup_{k \in \N} \bigcap_{n \geq N} X(a,b,n)$ for all $a, b \in \N$.
Namely, we choose $X(a,b,n)$ as the set of those $x \in X$ for which there exists a partition $n = i_1 + \cdots + i_k$ such that
\[
x \in \bigcap_{c=1}^k \left( \bigcap_{d = 0}^{i_c-1} X(a,b,c,d) \right) \setminus X(a,b,c,i_c).
\]

As in Remark~\ref{rem:not-enough}, the coloring $y$ contains an infinite black path extending from the origin to the southwest, and another black path $L = (t_i)_{i \geq 0}$ extending directly to the north, where the center of $t_0$ is the origin.
From the vertical path $L$, an infinite number of black paths $(P_i)_{i \in \N}$ branch to the east, so that the first tile of path $P_i$ is tile $t_{2i}$ of $L$.
Denote by $k_i$ the horizontal line that crosses the center of the tile $t_{2i}$.
Define also vertical lines $\ell_j$ for $j \in \N$ such that $\ell_0$ passes through the origin and $\ell_{j+1}$ lies $2j+4$ columns to the right of $\ell_j$.
For each $j \in \N$, the path $P_i$ will pass through the tile $t_{(i,j)}$ containing the intersection point of $\ell_j$ and $k_{i + j}$.
Between $t_{(i,j)}$ and $t_{(i,j+1)}$ it will descend to intersect some line $k_{i+j'}$ with $0 \leq j' \leq j$.
This segment of $P_i$ (or the collection of these segments for all the paths) is called \emph{step $j$}.
Note that $P_i$ can descend to $k_{i+j'}$ on step $j$ only if for all $i' < i$, the path $P_{i'}$ also descends to $k_{i'+j'}$ or below on step $j$.
Then black wins if and only if some path $P_i$ descends to any given line only on a finite number of steps.
See Figure~\ref{fig:lower-bound-structure} for an illustration.

\begin{figure}[htp]
  \begin{center}
    \begin{tikzpicture}[scale=0.2]

      \node [right] at (\poshexi{14}{2}) {$P_0$};
      \node [right] at (\poshexi{14}{4}) {$P_1$};
      \node [right] at (\poshexi{14}{6}) {$P_2$};
      \node [right] at (\poshexi{14}{8}) {$P_3$};
      \node [right] at (\poshexi{14}{10}) {$P_4$};
      \node [above] at (\poshex{0}{11}) {$L$};

      \node [below] at (\poshex{0}{-3}) {$\ell_0$};
      \node [below] at (\poshex{2}{-3}) {$\ell_1$};
      \node [below] at (\poshex{8}{-3}) {$\ell_2$};
      \node [left] at (\poshex{-3}{0}) {$k_0$};
      \node [left] at (\poshex{-3}{2}) {$k_1$};
      \node [left] at (\poshex{-3}{4}) {$k_2$};
      \node [left] at (\poshex{-3}{6}) {$k_3$};
      \node [left] at (\poshex{-3}{8}) {$k_4$};
      \node [left] at (\poshex{-3}{10}) {$k_5$};
      
      \clip ($(\poshex{-3}{-3})-(0.5,0.5)$) rectangle ($(\poshexi{14}{10})+(0.4,0.5)$);

      \foreach \x in {-3,...,14}{
        \foreach \y in {-3,...,10}{
          \drawhex{\x}{\y}
          \drawhexi{\x}{\y}
        }
      }

      \foreach \x in {0,2,8}{
        \draw [densely dotted] (\poshex{\x}{-4}) -- (\poshex{\x}{11});
      }
      \foreach \y in {0,2,...,10}{
        \draw [densely dotted] (\poshex{-4}{\y}) -- (\poshex{15}{\y});
      }

      \foreach \x in {-3,-2,-1}{
        \draw [fill=black!80] (\poshex{\x}{\x}) circle (0.5cm);
        \draw [fill=black!80] (\poshexi{\x}{\x}) circle (0.5cm);
      }
      \foreach \y in {0,...,11}{
        \draw [fill=black!80] (\poshex{0}{\y}) circle (0.5cm);
      }
      \foreach \y in {0,2,...,10}{
        \draw [fill=black!80] (\poshexi{0}{\y}) circle (0.5cm);
      }

      \foreach \y/\yi [count=\x] in {1/1,2/1,1/0,0/0,1/1,2/2,3/3,4/3,3/2,2/1,1/0,0/0,1/1,2/2}{
        \pgfmathsetmacro{\yy}{\y+0}
        \pgfmathsetmacro{\yyi}{\yi+0}
        \draw [fill=black!80] (\poshex{\x}{\yy}) circle (0.5cm);
        \draw [fill=black!80] (\poshexi{\x}{\yyi}) circle (0.5cm);
      }
      \foreach \y/\yi [count=\x] in {1/1,2/1,1/0,0/0,1/1,2/2,3/3,4/3,3/2,2/1,1/0,0/0,1/1,2/2}{
        \pgfmathsetmacro{\yy}{\y+2}
        \pgfmathsetmacro{\yyi}{\yi+2}
        \draw [fill=black!80] (\poshex{\x}{\yy}) circle (0.5cm);
        \draw [fill=black!80] (\poshexi{\x}{\yyi}) circle (0.5cm);
      }
      \foreach \y/\yi [count=\x] in {1/1,2/1,2/1,2/1,2/1,2/2,3/3,4/3,3/2,2/1,1/0,0/0,1/1,2/2}{
        \pgfmathsetmacro{\yy}{\y+4}
        \pgfmathsetmacro{\yyi}{\yi+4}
        \draw [fill=black!80] (\poshex{\x}{\yy}) circle (0.5cm);
        \draw [fill=black!80] (\poshexi{\x}{\yyi}) circle (0.5cm);
      }
      \foreach \y/\yi [count=\x] in {1/1,2/1,2/1,2/1,2/1,2/2,3/3,4/3,3/2,2/1,2/1,2/1,2/1,2/2}{
        \pgfmathsetmacro{\yy}{\y+6}
        \pgfmathsetmacro{\yyi}{\yi+6}
        \draw [fill=black!80] (\poshex{\x}{\yy}) circle (0.5cm);
        \draw [fill=black!80] (\poshexi{\x}{\yyi}) circle (0.5cm);
      }
      \foreach \y/\yi [count=\x] in {1/1,2/1,2/1,2/1,2/1,2/2,3/3,4/3,3/2,2/1,2/1,2/1,2/1,2/2}{
        \pgfmathsetmacro{\yy}{\y+8}
        \pgfmathsetmacro{\yyi}{\yi+8}
        \draw [fill=black!80] (\poshex{\x}{\yy}) circle (0.5cm);
        \draw [fill=black!80] (\poshexi{\x}{\yyi}) circle (0.5cm);
      }
      \foreach \y/\yi [count=\x] in {1/1,2/1,2/1,2/1,2/1,2/2,3/3,4/3,3/2,2/1,2/1,2/1,2/1,2/2}{
        \pgfmathsetmacro{\yy}{\y+10}
        \pgfmathsetmacro{\yyi}{\yi+10}
        \draw [fill=black!80] (\poshex{\x}{\yy}) circle (0.5cm);
        \draw [fill=black!80] (\poshexi{\x}{\yyi}) circle (0.5cm);
      }

      \foreach \x [count=\i from 0] in {0,2,8}{
        \foreach \y in {0,2,...,8,10}{
          \pgfmathsetmacro{\yi}{\y+2*\i}
          \draw[fill=white] (\poshex{\x}{\yi}) circle (0.2cm);
        }
      }
      
    \end{tikzpicture}
    \caption{The structure of the coloring $y$. The tiles $t_{(i,j)}$ are marked with white dots.}
    \label{fig:lower-bound-structure}
  \end{center}
\end{figure}
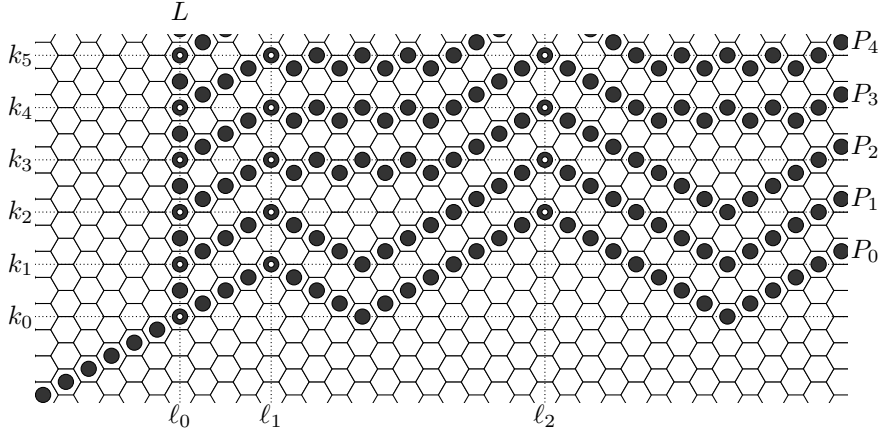

For $i, j, b \in \N$, define $K(i,j) = \{b \leq j : x \notin X(i,b,j)\}$.
The condition $b \in K(i,j)$ should be thought of as ``evidence'' for $x \notin \bigcup_k \bigcap_{n \geq k} X(i,b,n)$, as the latter is equivalent to the condition that $b \in K(i,j)$ for infinitely many $j$.
If $b \in K(i,j)$, we say that path $P_i$ is \emph{commanded to line $k_{i+b}$ on step $j$}.

For each $(i,j) \in \N^2$ we define a finite set $A(i,j) \subset \{0, \ldots, j\}^2$ of integer pairs.
The intuition for $(b,b') \in A(i,j)$ is that path $P_i$ has been commanded to descend to $k_{i+b}$, but up until step $j$ has only managed to descend to $k_{i+b'}$.
First, $A(i,0) = \emptyset$ for all $i \in \N$, since there are no commands before step $0$.
For $j > 0$, define the set $A'(i,j) = A(i,j-1) \cap \{ (b,j) : b \in K(i,j) \}$, which corresponds to the old and new commands of $P_i$.
Let $b(i,j) = \min \{ b : (b,b') \in A'(i,j) \}$ be the relative index of the lowest line that path $P_i$ has been commanded to, and $r(i,j) = \min \{ j - b(i',j) : i' \leq i \}$ the minimum distance that any path in $P_0, \ldots, P_i$ has been commanded to descend.

For each $(b,b') \in A'(i,j)$ we do the following.
\begin{enumerate}
\item If $b' \leq j - r(i,j)$, then we put $(b,b')$ into $A(i, j+1)$.
\item If $b < j - r(i,j) < b'$, then we put $(b, j - r(i,j))$ into $A(i, j+1)$.
\item If $j - r(i,j) \leq b$, then we do not modify $A(i, j+1)$.
\end{enumerate}
This concludes the definition of $A(i, j+1)$.
We define that in the coloring $y$, the path $P_i$ descents to line $k_{j-r(i,j)}$ on step $j$.
This is possible, since for each $j$, the number $j-r(i,j)$ is strictly increasing in $i$.
This concludes the construction of $y = f(x)$.
The function $f$ is computable, since the sets $K(i,j)$ and $A(i,j)$ are computable from $x$.

Suppose that $x \in X$.
Then for some $i \in \N$, every $b \in \N$ lies in only finitely many $K(i,j)$.
Thus, the path $P_i$ is commanded to each line $k_{i+b}$ only finitely many times.
Let $b \in \N$, and suppose $j$ is the last step on which $P_i$ is commanded to any of the lines in $k_i, \ldots, k_{i+b}$.
Then $m_b(j) = \max \{ b'' : b' \leq b, (b', b'') \in A(i,j) \}$ is decreasing in $j \geq j_b$ (here $\max \emptyset = -\infty)$, so on some step $j_b'$ it reaches its minimum value.
After step $j_b'$, the line $P_i$ will not descend to or below line $k_{i+b}$, as doing so would require either being commanded to do so, or for $m_b(j)$ to decrease.
Hence the path $P_i$ reaches to ${+}$, and $y \in \mathcal{W}$.

Suppose then that $x \notin X$: for each $i \in \N$, there exists $b_i \in \N$ that lies in $K(i,j)$ for infinitely many $j$.
Define $b'_0 = b_0$, and for $i > 0$ let $b'_i = \min \{ b_i, b'_{i-1}+1\}$.
We prove by induction on $i$ that each path $P_i$ descends infinitely often to line $k_{i+b'_i}$.
This is true for $i = 0$: every time $P_0$ is commanded to descend to $k_{b'_0}$, it will do so immediately.
For $i > 0$, suppose path $P_i$ is commanded to descend to $k_{i+b_i}$ on step $j$.
Let $j' \geq j$ be the earliest step on which path $P_{i-1}$ descends to $k_{i-1 + b'_{i-1}}$, which exists by the induction hypothesis.
On any step $j \leq j'' < j'$, path $P_i$ is not able to descend to $k_{i + b'_i}$ or below.
Thus the set $A'(i,j')$ contains a pair $(b_i, b')$ with $b' > b'_i$, so $P_i$ descends to $k_{i+b'_i}$ on step $j'$.
This means $y \notin \mathcal{W}$, as none of the paths $P_i$ reaches to ${+}$.

We have proved that $\mathcal{W}$ is $\Sigma^0_4$-hard in terms of Turing reductions.
For Wadge reductions, we simply drop the assumption that the sets $X(a,b,c,d)$ be computable from $a, b, c, d \in \N$.
Each set $K(i,j)$ depends on finitely many indices of $x$, so the function $f$ remains continuous.
The rest of the proof is unaffected, so $f$ is a Wadge reduction.

\section{Conclusion}

We have placed $\mathcal{W}$, the winning condition of infinite Hex, inside the Borel hierarchy, more specifically between $\mathbf{\Sigma}^0_4$ and $\mathbf{\Delta}^0_5$.
However, its exact position remains open.
The fact that the subformulas $\phi_2({*})$ are $\Pi^0_4$ prevents us from placing $\mathcal{W}$ in $\mathbf{\Sigma}^0_4$, and on the other hand, we have not found a reduction for $\mathbf{\Pi}^0_4$-completeness.

Recall that for $n \geq 1$, the class $\mathbf{\Delta}^0_{n+1}$ has no complete sets in terms of Wadge reductions, instead splitting into the \emph{difference hierarchy} over $\mathbf{\Sigma}^0_n$ indexed by the countable ordinals \cite[Section~22.E]{Ke95}.
The finite levels of this sub-hierarchy consist of finite Boolean combinations of $\mathbf{\Sigma}^0_n$ sets.
Thus, if $\mathcal{W}$ is not in $\mathbf{\Sigma}^0_4$, at least it lies relatively low on the difference hierarchy.

\begin{problem}
  What is the exact position of $\mathcal{W}$ in the Borel hierarchy?
\end{problem}

Any tiling $\mathcal{T}$ of $\R^2$ by finite polygonal tiles gives a variant of infinite Hex that is played on $\mathcal{T}$ instead of the regular hexagonal tiling.
The goal of black is to construct a bi-infinite black path $(t_i)_{i \in \Z}$ such that for each $n$, the tile $t_i$ is contained in $[n, \infty)^2$ and $t_{-i}$ is contained in $(-\infty, -n]^2$ for all large enough $i > 0$, and white has the analogous winning condition.
Adjacency of tiles is defined by having a shared edge of positive length, so tiles that overlap only at their corners are not adjacent.
This preserves Theorem~2 of \cite{HaLe23}: at most one player can win.
Let $\mathcal{W}_\mathcal{T}$ be the set of winning positions for black in this variant.

It seems that if $\mathcal{T}$ contains only finitely many congruence classes of tiles, then Theorems~\ref{thm:upper-bound} and~\ref{thm:lower-bound} hold for $\mathcal{W}_\mathcal{T}$ with essentially the same proofs.
However, if $\mathcal{T}$ contains tiles of arbitrarily small diamater, Lemma~\ref{lem:no-room} may be false, as the number of tiles on a shortest path between two lines may be unrelated to their Euclidean distance.
On the other hand, if $\mathcal{T}$ contains arbitrarily large tiles, the construction of section~\ref{sec:lower-bound} may not be possible, as the thickness of each path $P_i$ may increase to the right.

\begin{problem}
  How does the position of $\mathcal{W}_\mathcal{T}$ in the Borel hierarchy depend on the tiling $\mathcal{T}$?
\end{problem}

\section*{Acknowledgements}

The author was supported by the Academy of Finland under grant 346566.

\bibliographystyle{plain}
\bibliography{hexbib}

\end{document}